\def\corn{{\operatorname{corn}}}
\newtheorem{theorem}{Theorem}[section]
\newtheorem{definition}[theorem]{Definition}
\newtheorem{example}[theorem]{Example}
\newtheorem{remark}[theorem]{Remark}
\newcommand{\Real}{\mathbb R}
\newcommand{\Net}{\mathbb N}
\newcommand{\one}{\mathbb{1}}
\newcommand{\zero}{\mathbb{0}}
\newcommand{\trop}[1]{\mathcal{#1}}
\newcommand{\tA}{\trop{A}}
\newcommand{\tD}{\trop{D}}
\newcommand{\tG}{\trop{G}}
\newcommand{\tI}{\trop{I}}
\newcommand{\tL}{\trop{L}}
\newcommand{\tM}{\trop{M}}
\newcommand{\tT}{\trop{T}}
\newcommand{\tR}{\trop{R}}
\newcommand{\tZ}{\trop{Z}}
\newcommand{\al}{\alpha}
\newcommand{\bt}{\beta}
\newcommand{\lm}{\lambda}
\newcommand{\Lm}{\Lambda}
    \newenvironment{proof}{
    \smallskip
    \noindent\emph{Proof.}}{\hfill\(\Box\)
    \bigskip
    } \fi
\newcommand{\ifdef}[3]{\ifthenelse{\equal{#1}{true}}{#2}{#3}}
\def\({\left(}
\def\){\right)}
\definecolor{lgray}{gray}{0.90}
\def\xdtl{\dottedline{0.15}}
\def\grid{\thinlines \xdtl(1,1)(1,9) \xdtl(2,1)(2,9)
\xdtl(3,1)(3,9) \xdtl(4,1)(4,9) \xdtl(5,1)(5,9) \xdtl(6,1)(6,9)
\xdtl(7,1)(7,9) \xdtl(8,1)(8,9) \xdtl(9,1)(9,9)
\xdtl(1,1)(9,1) \xdtl(1,2)(9,2) \xdtl(1,3)(9,3)
\xdtl(1,4)(9,4) \xdtl(1,5)(9,5) \xdtl(1,6)(9,6)
\xdtl(1,7)(9,7)  \xdtl(1,8)(9,8)  \xdtl(1,9)(9,9) }
\newcommand{\ds}[1]{\ {#1} \ }
\newcommand{\dss}[1]{\quad {#1} \quad }
\def\htf{\widehat f}
\def\htg{\widehat g}
\def\hth{\widehat h}
\def\hta{\widehat a}
\def\htbfa{\widehat \bfa}
\def\mfA{\mathfrak A}
\def\mfC{\mathfrak C}
\def\mfCL{\mathfrak C(\tR)_{\operatorname{lay}}}
\def\vmap{\vartheta}
\def\mfCA{\mathfrak C(\tR)_{\operatorname{adm}}}
\def\udscr{\underline{\phantom{x}} \,}
\def\corn{{\operatorname{corn}}}
\def\admissible{admissible}
\def\algset{algebraic set}
\def\lalgset{layered algebraic set}
\def\algsets{algebraic sets}
\def\algssets{algebraic subsets}
\def\tngres{tangible part}
\def\vep{\varepsilon}
\def\lv{\operatorname{s}}
\def\Lau{\operatorname{Laur}}
\def\Ratl{\operatorname{Rat}}
\def\rat{\operatorname{rat}}
\def\mcZ{\mathcal Z}
\def\tng{{\operatorname{tng}}}
\def\mfA{\mathfrak A}
\def\mcR{\mathcal R}
\def\tSS{S}
\def\({\left(}
\def\){\right)}
\def\Z{{\mathbb Z}}
\def\Q{{\mathbb Q}}
\def\pipe{{\underset{{\ \, }}{\mid}}}
\def\vsemifield0{$\nu$-semifield$^\dagger$}
\def\vdomain0{$\nu$-domain$^\dagger$}
\def\pipe1{{\underset{{1}}{\mid}}}
\def\lmod1{\mathrel  \pipe1  \joinrel \joinrel =}
\def\Laur{\operatorname{Laur}}
\def\Pol{\operatorname{Pol}}
\def\FunSR{\operatorname{Fun} (S,R)}
\def\FunSpR{\operatorname{Fun} (S',R)}
\def\CFunFF1{\operatorname{CFun} (F,F)}
\def\semiring0{semiring$^{\dagger}$}
\def\Semiring0{Semiring$^{\dagger}$}
\def\Semirings0{Semirings$^{\dagger}$}
\def\semidomain0{semidomain$^{\dagger}$}
\def\semifield0{semifield$^{\dagger}$}
\def\semifields0{semifields$^{\dagger}$}
\def\vsemifields0{$\nu$-semifields$^{\dagger}$}
\def\domain0{domain$^{\dagger}$}
\def\predomain0{pre-domain$^{\dagger}$}
\def\predomains0{pre-domains$^{\dagger}$}
\def\domains0{domains$^{\dagger}$}
\def\vdomains0{$\nu$-domains$^{\dagger}$}
\newcommand{\xl}[2]{\,{^{[#2]}}{#1}\,}
\def\tltM{\widetilde{\tM}}
\def\Fun{\operatorname{Fun}}
\def\stTF{{\Fun_{\operatorname{abtng}}}}
\def\tGF{{\Fun_{\operatorname{gh}}}}
\def\tTFunSF{\tR_{\operatorname{tng}}}
\def\stTFunSF{\tR_{\operatorname{abtng}}}
\def\tGFunSF{\tR_{\operatorname{gh}}}
\def\domains0{domains$^\dagger$}
\def\stTFunSR{\stTF(S,R)}
\def\tGFunSR{\tGF(S,R)}
\def\corn{{\operatorname{corn}}}
\def\nucong{\cong_\nu}
\def\nug{>_\nu}
\newcommand{\etype}[1]{\renewcommand{\labelenumi}{(#1{enumi})}}
\def\eroman{\etype{\roman}}
\def\pipe{{\underset{{\tG}}{\mid}}}
\def\lmod{\mathrel  \pipe \joinrel \joinrel =}
\def\pipe{{\underset{{\tG}}{\mid}}}
\def\pSkip{\vskip 1.5mm \noindent}
\def\vmap{\vartheta}
\def\a{\alpha}
\newtheorem{thm}[theorem]{Theorem}
\newtheorem*{thm*}{Theorem}
\newtheorem{cor}[theorem]{Corollary}
\newtheorem{lem}[theorem]{Lemma}
\newtheorem{rem}[theorem]{Remark}
\newtheorem{prop*}{Proposition}
\newtheorem{prop}[theorem]{Proposition}
\newtheorem{defn}[theorem]{Definition}
\newtheorem*{examp*}{Example}
\newtheorem*{examples*}{Examples}
\newtheorem*{remark*}{Remark}
\newtheorem*{defn*}{Definition}
\newtheorem{construction}[theorem]{Construction}
\def\la{\lambda}
\def\La{\Lambda}
\def\tT{\mathcal T}
\def\Fun{\operatorname{Fun}}
\numberwithin{equation}{section}
\def\M0{M_{\zero}}
\def\supp{\operatorname{supp}}
\def\SR{R}
\def\PS{P}
\def\Cong{\Omega}
\def\rzero{\zero_\SR}
\def\mone{\one_\tM}
\def\rone{{\one_\SR}}
\def\fone{\one_F}
\def\semirings0{semirings$^\dagger$}
\newcommand{\nPS}[1]{\PS_{(!#1)}}
\newcommand{\nPSo}[1]{\nPS{\one}}
\def\srHom{\varphi}
\def\bfa{ \textbf{a}}
\def\bfb{\textbf{b}}
\begin{document}


\title[Congruences and Coordinate Semirings]
{Congruences and coordinate semirings \\[2mm] of tropical varieties}


\author[Z. Izhakian]{Zur Izhakian}
\address{Institute  of Mathematics,
 University of Aberdeen, AB24 3UE,
Aberdeen,  UK}
\email{zzur@abdn.ac.uk; zzur@math.biu.ac.il}

\author[L. Rowen]{Louis Rowen}
\address{Department of Mathematics, Bar-Ilan University, Ramat-Gan 52900,
Israel} \email{rowen@math.biu.ac.il}

\subjclass[2010]{Primary   14T05, 16Y60   16Y60; Secondary 06F20,
12K10.
  }

\date{\today}


\keywords{Tropical algebra, tropical geometry, supertropical
algebra,  $\nu$-domain, $\nu$-semifield, coordinate \semiring0,
admissible variety,  dimension.}


\thanks{\noindent \underline{\hskip 3cm } \\ File name: \jobname}


\begin{abstract}

In this paper we present two intrinsic algebraic
 definitions of tropical variety motivated by the classical
 Zariski correspondence, one utilizing the algebraic
structure of the coordinate \semiring0 of an affine
  supertropical \algset, and the second based on the layered
  structure.  We tie them to tropical
geometry, especially in connection with the dimension of an affine
variety.
\end{abstract}

\maketitle





\section{Introduction}
\numberwithin{equation}{section} The goal of this paper is to study
families of affine supertropical varieties in terms of their
coordinate \semirings0, or equivalently certain congruences of the
polynomial semiring, paying particular attention to an algebraic
formulation of tropical dimension which will match the intuitive
definition obtained from simplicial complexes. Tropical varieties
have been the focus of much investigation in tropical geometry,
cf.~\cite{Gat,IMS}, often defined in terms of polyhedral complexes (i.e., piecewise linear
objects) satisfying the balancing condition, but this approach,
although successful for curves and hypersurfaces, is not fully compatible  with
Zariski's approach to viewing varieties as the zero locus of an
ideal $\mathcal I$ of polynomials in $K[\la_1, \dots, \la _n]$ over
a field $K$. A key feature of Zariski's approach is the prime
spectrum of the coordinate ring $K[\la_1, \dots, \la
_n]/\mathcal I$, which, in classical theory, also is identified with the algebra of
polynomials restricted to the variety.
%

The authors have translated the tropical theory to an algebraic language more
amenable to structure theory, for example in \cite{zur05TropicalAlgebra},
\cite{IzhakianRowen2007SuperTropical},
\cite{IzhakianKnebuschRowen2009Refined}, \cite{IKR4}, and
\cite{IzhakianKnebuschRowen2011CategoriesII}, where an extra ``ghost
level'' $\tA^\nu$ is adjoined to the original max-plus algebra~$\tA$,
and additive idempotence is replaced by supertropicality, i.e., $a+a = a^\nu,$
cf.~\S\ref{trop}. 
In this framework, the \algset\ of a collection of polynomials is
just the set of vectors all taking on ghost values. Although
encapsulating the definition of ``corner locus'' in standard
tropical geometry, this approach enables one to set up a direct
algebraic approach analogous to the Zariski correspondence.

Even so, one encounters difficulty when considering algebraic sets
of polynomials: The intersection of tropical varieties need not be a
tropical variety in the usual sense (even for planar curves). For
example, the non-transversal intersection of the curves defined by
$x+y+0$ and $x+y^2+0$ is the union of the two rays emanating along
the axes from the origin and fails the balancing condition, as does
the non-transversal intersection of the lines defined by $x+y+0$ and
$x+y +1,$ cf.~Figure~\ref{fig:0} (a) and (b), respectively. Such
curves can be excluded via a requirement that curves are in generic
position, but one would prefer a theory that   deals with all cases.

\begin{figure}[h]
\begin{pspicture}(5,5)(0,0)
\setlength{\unitlength}{0.5cm}
\grid
\Thicklines
\put(1.5,5){\line(1,0){3.5}}
\Thicklines
\put(5,1.5){\line(0,1){3.5}}
\Thicklines
\put(5,5){\line(1,1){3.5}}
{\red \put(5,5){\line(2,1){3.5}}}
\put(4,0){(a)}
\end{pspicture}
\begin{pspicture}(5,5)(0,0)
\setlength{\unitlength}{0.5cm}
\grid
\Thicklines
\put(1.5,5){\line(1,0){3.5}}
\put(5,1.5){\line(0,1){3.5}}
\put(5,5){\line(1,1){3.5}}
{\red \put(1.5,6){\line(1,0){4.5}}}
{\red \put(5.8,1.5){\line(0,1){4.5}}}
\put(4,0){(b)}
\end{pspicture}

\caption{}\label{fig:0}
\end{figure}

The approach taken in this paper is to define the \textbf{coordinate
\semiring0} of an \algset\ $X$ as the \semiring0 of polynomials
(over the supertropical structure $ \tA \cup \tA^\nu$), realized as
functions, restricted to $X$. This enables one to study the spectrum
(but now of congruences rather than ideals), and leads to a
correspondence between \algsets\ and congruences, as indicated in
\cite{IKR4}. The challenge remains of using the algebraic structure
to filter out the ``bad'' \algsets\ of the previous paragraph,
namely, those that do not satisfy the balancing condition. The
obvious way is to restrict the class of permissible congruences
defining our algebraic sets. Several options have been proffered,
most notably the ``bend congruences'' of \cite{GG}. In this paper,
we present two supertropical alternatives which are based on
algebraic and topological considerations.

Our main approach, given in \S\ref{coor}, is via the coordinate
\semiring0 of Definition~\ref{coord2}. We impose a requirement on
functions whose value on a dense algebraic subset are equal, and
call these algebraic sets \textbf{admissible}.  This natural
condition is automatic in the classical algebraic geometrical world,
by virtue of the easy part of the fundamental theorem of algebra,
but needs to be stipulated in the tropical world. Tropical
hypersurfaces are admissible, by Proposition~\ref{admis}, whereas
when we ruin the balancing condition by erasing a facet, the
algebraic set becomes inadmissible, by Proposition~\ref{admp}. In
this way, admissibility provides a natural generalization of the
balancing condition in higher codimensions.

Once one focuses on the appropriate algebraic sets, it is not
difficult to define the dimension in terms of the length of chains
of admissible varieties in \S\ref{admiss1}, and prove that it is
well-defined
 and consistent with the geometric
intuition (Theorem~\ref{primelen2}). Nevertheless, at times the
theory diverges from classical algebraic geometry. For example,
\algsets\ can decompose non-uniquely into varieties, as is seen
in~Example~\ref{irredundant}.



The main weakness of Definition~\ref{coord2} comes from its
strength: The intersection of admissible algebraic sets need not be
admissible.  Indeed, in the planar scenario,  we do not want the
intersection of a tropical line and quadric to be admitted, since
then we would have to permit all line segments as varieties, and
thus they all would be reducible (except for the points). On the
other hand, if one wants to define a topology whose base is the
closed sets, one needs the intersection of varieties to be a
variety. This leads us in our second approach to a further
refinement of the supertropical structure, namely the layered
structure of \cite{IzhakianKnebuschRowen2009Refined}, and in
\S\ref{sec:layer} we present a   class of congruences which is
closed under intersections, taken the layering into account, and
also is Noetherian by Proposition~\ref{Noeth}. Thus, we also have a
notion of dimension here, but globally it is larger than the
simplicial dimension. This discrepancy can be overcome, but requires
a more detailed local treatment that is beyond the scope of this
paper.

\section{Background}

We review a few notions from semigroups and semirings.
As customary, $\Net$ denotes the positive natural numbers,  $\Q$ denotes the rational numbers, and
$\Real$ denotes the real numbers.
\subsection{Semigroups and monoids} $  $

A \textbf{monoid} is a semigroup with a unit element $\mone$.  For
any semigroup $\tM := (\tM, \cdot \,)$ we can formally adjoin the
unit element $\mone$ by declaring that $\mone a = a\mone = a$ for
all $a\in \tM,$ so when dealing with multiplication we work with
monoids.

 An Abelian monoid
$\tM := (\tM, \cdot \, )$ is \textbf{cancellative} with respect to a
subset $S \subseteq \tM$ if $as = bs$ implies $a=b$ whenever $a,b
\in \tM$ and $s\in S.$ In this case, we also say that $S$ is a
\textbf{cancellative} subset of $\tM$.
%

\subsection{Ordered monoids}

\begin{defn}
 A \textbf{partially ordered monoid}
is a monoid~$\tM $ with a partial order satisfying
\begin{equation}\label{ogr1} a \le b \quad \text{implies}\quad ca
\le cb,\end{equation} for all elements $a,b,c \in \tM$. A monoid
$\tM $ is   \textbf{ordered} if the order is total.
\end{defn}

Note that this definition excludes ordered Abelian groups such as
$(\Q,\cdot \,)$ from consideration; on the other hand, $(\Q,+\,)$ is
ordered in this sense.
%

 \begin{defn}
 A
semigroup $\tM := (\tM, \cdot \,)$ is called $\Net$-\textbf{divisible}  if $\root n
\of a \in \tM$ for all $a\in \tM$ and all $n \in \mathbb N.$
A
monoid is \textbf{power-cancellative} if $a^m = b^m$ implies
$a=b$.
\end{defn}

 \begin{rem} One can uniquely define rational powers of any element in an $\Net$-divisible, power-cancellative semigroup $\tM$; adjoining a
unit element $\one_\tM$ to $\tM$, we could define $a^0 = \one_\tM$.
\end{rem}

\begin{rem}\label{divcl00}  By Bourbaki~\cite{B},
any strictly  cancellative Abelian  monoid $\tM$ can be embedded
into an $\Net$-divisible Abelian monoid $\tltM$, which we call the
\textbf{divisible closure} of $\tM$. Namely, by passing to the group
of fractions, cf.~ \cite{B}, we may assume that $\tM$ is a group. We
formally introduce $\root m \of a$ for each $a\in \tM$, identifying
$\root m \of a$ with $\root n \of b $ iff $a^n = b^m.$ We define the
product
$$\root m \of a \root n \of b = \root {mn} \of {a^nb^m}.$$

\end{rem}

%
%

 \begin{lem}\label{divcl01} If $\tM$ is partially ordered, then $\tltM$ is endowed with the
partial order given by
  $$\root m \of a \le \root n \of b  \dss{\text{iff}} a^n \le
b^m.$$ If $\tM$ is power-cancellative, then $\tltM$ is
power-cancellative.\end{lem}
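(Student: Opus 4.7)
The plan is to verify both claims of the lemma, treating in turn (i) well-definedness and the partial-order axioms for the relation, and (ii) the power-cancellativity assertion.

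For (i), the relation must be checked against the equivalence classes cutting out $\tltM$, i.e., against identifications $\root m \of a = \root{m'}\of{a'}$ whenever $a^{m'}=(a')^m$. I would take such a pair together with an analogous identification $\root n \of b = \root{n'}\of{b'}$ and check directly that $a^n \le b^m$ iff $(a')^{n'} \le (b')^{m'}$. The forward direction is obtained by raising the inequality to the $m'n'$-th power via the one-sided compatibility $x \le y \Rightarrow xc \le yc$ and substituting through the identifications, producing $((a')^{n'})^{mn} \le ((b')^{m'})^{mn}$. The reverse extraction---dropping the outer $mn$-th power---is the main obstacle, since the bare axioms of partial-order compatibility only supply one direction; here I expect to use the Bourbaki construction (passage to the group of fractions in Remark~\ref{divcl00}) together with strict cancellativity to argue that taking powers reflects the order on the elements in question.

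Once well-definedness is in hand, reflexivity reduces to the tautology $a^m \le a^m$; antisymmetry combines antisymmetry on $\tM$ with the equivalence relation defining $\tltM$; and transitivity---showing $a^n \le b^m$ together with $b^p \le c^n$ yields $a^p \le c^m$---follows by raising the two inequalities to suitable powers, multiplying, and extracting the resulting $n$-th root inequality in the same spirit as well-definedness. Compatibility with multiplication on $\tltM$ is then routine using the product formula $\root m \of a \cdot \root p \of c = \root{mp}\of{a^p c^m}$ from Remark~\ref{divcl00}.

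For (ii), suppose $\tM$ is power-cancellative and that $(\root m \of a)^k = (\root n \of b)^k$ in $\tltM$. A short induction on $k$ using the product formula yields $(\root m \of a)^k = \root m \of{a^k}$, so the hypothesis reduces to $\root m \of{a^k} = \root n \of{b^k}$. By the defining equivalence on $\tltM$, this is the equality $(a^k)^n = (b^k)^m$ in $\tM$, which rewrites as $(a^n)^k = (b^m)^k$. Invoking power-cancellativity of $\tM$ gives $a^n = b^m$, i.e., $\root m \of a = \root n \of b$, as required.
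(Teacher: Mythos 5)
Your treatment of part (ii) coincides with the paper's: from $(\root m \of a)^k = (\root n \of b)^k$ one reads off $a^{nk} = b^{mk}$ in $\tM$, cancels the $k$-th power to get $a^n = b^m$, and concludes $\root m \of a = \root n \of b$. That part is fine.

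For part (i) you have correctly isolated the crux that the paper waves away with ``easily seen'': both well-definedness and transitivity require extracting an inequality between the original elements from an inequality between their powers, i.e.\ one needs the order to be \emph{reflected} by taking powers ($x^k \le y^k \Rightarrow x \le y$). Your proposed resolution --- that strict cancellativity together with the Bourbaki passage to fractions supplies this --- is not sufficient for a general partial order. A counterexample: take $\tM = (\Z, +)$ with $x \le y$ iff $y - x$ is a non-negative even integer. This is a strictly cancellative group carrying a compatible partial order, yet $2\cdot 1 \le 2\cdot 2$ while $1 \not\le 2$; correspondingly the relation on $\tltM = \Q$ is not well-defined, since $\root 1 \of 1 = \root 2 \of 2$ but the defining inequalities against $\root 1 \of 2$ evaluate to $1 \le 2$ (false) in the first representation and $2 \le 4$ (true) in the second. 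What does give power-reflection is \emph{totality} of the order together with strict cancellativity: if $x^k \le y^k$ but $y < x$, then $y^k < x^k$ (strictness propagates through powers precisely because of cancellativity), a contradiction. Given the paper's summary sentence immediately after the lemma, which speaks of ``ordered'' (totally ordered) monoids, this is almost certainly the intended scope, so your instinct is right in spirit; but you should invoke totality, not cancellativity alone, to close the reflection step, and note that the lemma as literally stated for a mere partial order is not correct.
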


 \begin{proof} The relation is   well-defined, and is easily
 seen to be a partial order. Furthermore, if $(\root m \of a)^k = ( \root n \of b)^k, $
 then $a^{nk} = b^{mk},$ implying $a^n = b^m,$ and thus $ \root m \of a =   \root n \of
 b .$
\end{proof}

In summary, any cancellative, power-cancellative ordered Abelian monoid
can be embedded into an $\Net$-divisible,  power-cancellative
ordered Abelian group, so we usually assume these hypotheses.

\subsection{\Semirings0}$ $

Semirings were studied by Costa~\cite{Cos}.
 A standard general reference for the structure of
semirings is~\cite{golan92}.  For reasons   discussed   in the
introduction of \cite{IKR4}, it is convenient to deal a semiring
without a zero element, which we call a \textbf{\semiring0}. Thus,
a \semiring0 $(R,+ \, ,\cdot \;, \rone)$ is a set $R$ equipped with two
binary operations $+$ and~$\cdot \;$, called addition and
multiplication, such that:
\begin{enumerate}
    \item $(\SR, +)$ is an Abelian semigroup; \pSkip
    \item $(\SR, \cdot \;, \rone )$ is a monoid with identity element
    $\rone$; \pSkip
    \item Multiplication distributes over addition; \pSkip
    \item There exist $a,b \in R$ such that $a+b = \rone.$\pSkip
\end{enumerate}

Condition (4) is a very weak condition that we do not need in this
paper, but is needed to develop the theory of modules in later work.
It is automatic in semirings with zero since $\rone + \rzero =
\rone,$ and also is obvious in the max-plus algebra since $\rone + b
= \rone$ for any $b \le \rone.$

\begin{rem}\label{makesemi} Any  ordered monoid $(\tM, \cdot \; )$
  gives rise to
a \semiring0, where we define $a+b$ to be $\max\{ a, b\}.$ Indeed,
associativity is clear, and distributivity  follows from
\eqref{ogr1}.
\end{rem}

 One can always adjoin an additive neutral element $\rzero$ to a
\semiring0 to get a semiring, via the multiplicative rule $$\rzero
\cdot a = a \cdot \rzero
 =\rzero \qquad \forall a\in R.$$
%

%

\begin{definition}
A \textbf{homomorphism} of \semirings0 is defined as a function
$\srHom : R \to R' $  that preserves addition and multiplication.
To wit, $\srHom  $   satisfies the following properties for all
$a$ and $b$ in $R$:
\begin{enumerate}
    \item $\srHom(a + b) = \srHom(a) + \srHom(b)$; \pSkip
    \item $\srHom(a \cdot b) = \srHom(a) \cdot \srHom(b)$;\pSkip
    \item  $\srHom(\rone) = \one_{R'}$. \pSkip
\end{enumerate}
\end{definition}
%

%

The structure theory of \semirings0 is motivated by general
considerations on universal algebra, for which we use \cite{Jac1980}
as a reference. We  recall as a special case from \cite
[p.~61]{Jac1980} that a \textbf{congruence} $\Cong$ on a \semiring0
$R$ is an equivalence relation $\equiv$ preserving addition and
multiplication, i.e., if $a _i \equiv b_i$ then $a_1+a_2 \equiv
b_1+b_2$ and $a_1a_2 \equiv b_1b_2$. Sometimes we denote $\Cong$ as
the relation $\equiv$, or, equivalently, as $\{ (a,b): a \equiv b\}
$, a sub-\semiring0 of $R\times R.$

 A congruence $\Cong $ is  \textbf{cancellative} if $ca \equiv cb$ implies  $ a
  \equiv b$; $\Cong$ is
\textbf{power-cancellative} when $R/\Cong$  is power-cancellative
(as a multiplicative monoid), i.e., if
 $a_1 ^k  \equiv  a_2^k$ for some $k \ge 1$ then $a_1 \equiv
a_2$. (Power-cancellative congruences, also called torsion-free in
\cite{CHWW}, play the role of radical ideals.)

Any \semiring0 homomorphism $\varphi: R \to R'$ gives rise to a
congruence $\Cong_\varphi$ on $R$ given by $(a,b) \in \Cong_\varphi $ iff $
\varphi(a) = \varphi(b)$;  conversely, any congruence $\Cong$ gives
rise to a  \semiring0 structure  $R/\Cong$ on the equivalence
classes, and a natural homomorphism $\varphi: R \to R/\Cong$ given
by $a \mapsto [a].$

\begin{example}
We define the \textbf{trivial congruence} $\Cong = \{ (a,a): a \in R
\}$; in this case, $\varphi: R \to R/\Cong$ is an isomorphism.
 \end{example}

As we shall see, the family of all congruences on the supertropical
structure is too broad to support a viable geometric theory, so we
restrict the family, to be specified later.

Let $\mfC (R)$ denote a given family of congruences on a given \semiring0~$R$.

 \begin{defn}\label{def:irrdCong}   A  congruence $\Cong\in \mfC (R)$ is
$\mfC (R)$-\textbf{irreducible}   if it cannot  be written as an
intersection $\Cong_1 \cap \Cong_2$ of congruences $\Cong_1$ and
$\Cong_2$ in $\mfC (R)$, each properly containing $\Cong.$
\end{defn}

 $\mfC (R)$ is too broad for our purposes without
a serious restriction. We say that $\mfC (R)$ is \textbf{Noetherian}
if any ascending chain of congruences in $\mfC (R)$ terminates.
Equivalently, any subset of congruences in  $\mfC (R)$ has a maximal
member. (For example, in classical algebra, one often takes $\mfC
(R)$ to be the finitely generated congruences of the polynomial
algebra.) The following observation is a standard application of
Noetherian induction:

\begin{prop} Every congruence in a Noetherian family $\mfC (R)$ of
congruences
 is a
finite intersection of  $\mfC (R)$-irreducible congruences.
\end{prop}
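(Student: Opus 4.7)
The plan is to run the standard Noetherian induction, adapted verbatim from the classical ring-theoretic primary decomposition argument, using the definition of Noetherian the authors just gave (every subset of $\mfC(R)$ has a maximal member).

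First I would set $\mcS$ to be the collection of congruences in $\mfC(R)$ that \emph{cannot} be written as a finite intersection of $\mfC(R)$-irreducible congruences, and aim to show $\mcS = \emptyset$ by contradiction. Assuming $\mcS \neq \emptyset$, the Noetherian hypothesis gives a maximal element $\Cong \in \mcS$. By the very failure of $\Cong$ to be such a finite intersection (note that an irreducible congruence is vacuously a finite intersection of irreducibles, namely itself), $\Cong$ in particular is not $\mfC(R)$-irreducible. Hence by Definition~\ref{def:irrdCong} we may write $\Cong = \Cong_1 \cap \Cong_2$ with $\Cong_1,\Cong_2 \in \mfC(R)$ each properly containing~$\Cong$.

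Next I would invoke the maximality of $\Cong$ in $\mcS$: since $\Cong_1 \supsetneq \Cong$ and $\Cong_2 \supsetneq \Cong$, neither $\Cong_i$ lies in $\mcS$, so each $\Cong_i$ is a finite intersection of $\mfC(R)$-irreducible congruences, say $\Cong_i = \bigcap_{j} \Cong_{i,j}$. Concatenating gives $\Cong = \Cong_1 \cap \Cong_2 = \bigcap_{i,j} \Cong_{i,j}$, a finite intersection of $\mfC(R)$-irreducibles, contradicting $\Cong \in \mcS$. Thus $\mcS = \emptyset$, as required.

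There is no real obstacle here; the only thing one has to be mildly careful about is that the notion of ``properly containing'' in Definition~\ref{def:irrdCong} matches the ordering on $\mfC(R)$ being used in the Noetherian ascending chain condition, and that the $\Cong_{i,j}$ remain in $\mfC(R)$ (guaranteed by the fact that the decomposition $\Cong_i = \bigcap_j \Cong_{i,j}$ is taken within $\mfC(R)$). No closure properties of $\mfC(R)$ under intersection are required for this direction.
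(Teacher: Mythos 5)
Your proof is correct and follows essentially the same Noetherian-induction argument as the paper: both take a maximal counterexample $\Cong$, observe it cannot be $\mfC(R)$-irreducible, write it as $\Cong_1 \cap \Cong_2$ with each $\Cong_i$ properly larger (hence not a counterexample), and conclude by concatenating their decompositions. The paper's version is simply more terse; your version spells out the same steps.
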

\begin{proof} Any maximal counterexample would be the intersection
of two larger congruences in $\mfC (R)$, each of which by hypothesis is
a finite intersection of congruences that are $\mfC (R)$-irreducible.
\end{proof}

There are several candidates for a working definition of $\mfC (R)$,
such as \cite{MR}. In this paper we offer two: First, a traditional
one using the Zariski topology, in \S\ref{supertrop1}, and then one
in terms of the layered theory given in \S\ref{lay1}.
%
\subsection{\vdomains0}\label{trop}$ $

Despite the elegance of Remark~\ref{makesemi}, the structure of the
resulting \semiring0 is too crude for some algebraic applications.
 To remedy this, we recall briefly the basics of supertropical algebra and generalize them in order
 to be able to handle functions.

\begin{defn}\label{super1} A \textbf{$\nu$-\semiring0} is a quadruple
$R := (R, \tT, \tG, \nu)$ where  $R$ is a \semiring0, $\tT \subset
R$ is a multiplicative submonoid, $\tG \subset R$ is a partially ordered
\semiring0 ideal, together with a map $\nu: R \to \tG$, satisfying
$\nu^2 = \nu$ as well as the conditions:
$$ \begin{array}{lll}
 a+b =  a  &   \text{whenever}  &  \nu(a) > \nu(b), \\
 a+b = \nu(a) &  \text{whenever} & \nu(a) = \nu(b).
\end{array}$$

$R $ is called a \textbf{$\nu$-\domain0} when the multiplicative
monoid $(R, \cdot \, )$ is  commutative and cancellative with
respect to $\tT$.

If furthermore $\tT$ (and thus also $\tG$) is an Abelian group, we
call $R$ a \textbf{\vsemifield0}.
\end{defn}
%

\noindent We write~$a^\nu$ for $\nu(a).$ We write $a \nucong b$
whenever $a^\nu = b^\nu$, and $a >_\nu b$ (resp. $a \geq_\nu b$) whenever $a^\nu > b^\nu$(resp. $a^\nu \geq b^\nu$).

$\tT$ is called the monoid of   \textbf{tangible elements}, while
the elements of $\tG$ are called \textbf{ghost elements} and $\nu: R
\to \tG$ is called the \textbf{ghost map}. Intuitively, the ghost
elements in $\tG$ correspond to the original max-plus algebra, and
$R$ is a cover of $\tG$.   But our interest lies in the tangible
layer $\tT$, since it captures the tropical geometry.

\begin{defn}\label{super2} A \textbf{supertropical
\domain0} is a $\nu$-\domain0 $R := (R, \tT, \tG, \nu)$ for which
$\tG := R \setminus \tT $ is ordered and the restriction $\nu|_\tT: \tT \to \tG$ is onto.
 If, moreover, $\tT$ is an Abelian group, we
call $R$ a \textbf{supertropical \semifield0}.
 \end{defn}

For each $a$ in a supertropical \domain0
 $R$ we choose
 an element $\hta \in \tT$ such that ${{\hta}}^\nu = a^\nu.$
 (Thus $a \mapsto \hta$ defines a section from $\tG$ to $\tT,$
 which we call the \textbf{tangible lift}.)
Likewise, for $\bfa = (a_1, \dots, a_n) \in R^{(n)},$ we define its
\textbf{tangible lift} $\htbfa := (\hta_1, \dots, \hta_n).$

 To clarify our exposition,  most of the examples in this
paper are presented for the supertropical \semifield0 $ (\Q \cup \Q
^\nu,  \Q, \Q ^\nu, \nu)$, where  $\one := 0_\Q$, cf.~
\cite{zur05TropicalAlgebra} and
\cite{IzhakianRowen2007SuperTropical}, built from the ordered group
$(\Q,+)$, whose operations are induced by the standard operations
$\max$ and $+$. Here, $\tT$ is one copy of $\Q$ whereas $\tG =
\Q^\nu,$ another copy of $\Q$, and $\nu |_{\tT}:  {\tT} \to \tG$ is
an isomorphism; hence we can take the tangible lift simply to be
$(\nu |_{\tT})^{-1}$. Likewise, the same construction could be for
any ordered Abelian group instead of $(\Q, +).$


 Tropical geometry is deeply connected to simplicial
complexes, and we also need the relevant topology in this setting.
\begin{defn}
The $\nu$-\textbf{topology} on a
supertropical \semifield0 $F$ is defined as having the sub-base of
neighborhoods $B(a,\vep) : = \big\{b \in F: \frac b a \le_\nu \vep
\big\},$ where $a,\vep \in \tT.$
\end{defn}

 But we work in the generality of Definition~\ref{super1} in
order to handle functions, in particular polynomials, for which
 $\tG$ is   only partially ordered.
Our structure of choice for understanding tropical geometry is the
polynomial \semiring0 over the $\nu$-\semifield0.

 We want to
describe congruences that arise with the $\nu$-structure.

\begin{rem}
Any congruence on  a $\nu$-\semiring0 $R$ satisfies the condition
that if  $a \equiv b$ then $$a^\nu = \rone^\nu a \equiv \rone^\nu b =
b^\nu.$$
\end{rem}
%

\begin{rem} If $\Cong$ is a
congruence of a $\nu$-\domain0 $R:= (R, \tT, \tG, \nu)$, then $\nu$
induces a ghost map $[\nu]$ on $R/\Cong = (R, \tT/\Cong,
 \tG/\Cong, [\nu])$ via $[a]^{[\nu]} = [a^\nu],$ and  when $\nu|_\tT \to \tG$ is 1:1, then the restriction $[\nu]: \tT/\Cong
\to \tG/\Cong$ also is 1:1. \end{rem}

We are interested in those congruences that yield $\nu$-domains.
Towards this end, we have:

\begin{defn}\label{def:primeCOng}  A congruence $\Cong$ on a $\nu$-\semiring0 $R$ is
\textbf{tangibly cancellative} when $ca \equiv cb$ implies $a \equiv b$ for any  $a \in \tT$.   \end{defn}


%
%

%
\section{Polynomial  \semirings0 over supertropical \domains0}

Our main strategy is to define affine tropical varieties in terms of
polynomials. We treat polynomials as functions that are defined
logically as elementary sentences, and study their algebraic
structure as a \semiring0.

\subsection{The function monoid and \semiring0}

\begin{defn}\label{Fun1} Given a monoid $\tM := (\tM, \cdot \, ),$ we define the monoid of functions $\Fun (S, \tM)$ to be the set-theoretic functions from
$S$ to~$\tM$, in the usual way (via pointwise
multiplication).\end{defn}

We say that a function $g \in \Fun (S, \tM) $ \textbf{dominates} a function $f \in \Fun (S, \tM)$ at $\bfa$ if $f(\bfa) \le g(\bfa).$
We take the   corresponding partial order on $\Fun (S,\tM)$ given
by
$f\le g$ iff $f(\bfa) \le g(\bfa)$ for each $\bfa \in S.$ 

\begin{lem}\label{cancext} If the monoid $\tM$ is  cancellative,
then so is $\Fun (S, \tM)$.\end{lem}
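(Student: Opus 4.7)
The plan is to reduce cancellation in $\Fun(S,\tM)$ to pointwise cancellation in $\tM$, exploiting the fact that multiplication in $\Fun(S,\tM)$ is defined coordinatewise.

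First I would fix functions $f,g,h \in \Fun(S,\tM)$ satisfying $fh = gh$, with the goal of deducing $f=g$. By Definition~\ref{Fun1}, multiplication is pointwise, so the hypothesis $fh=gh$ unpacks to the family of equalities $f(\bfa)\,h(\bfa) = g(\bfa)\,h(\bfa)$ indexed by $\bfa \in S$. Since $\tM$ is cancellative, each of these equalities in $\tM$ permits the cancellation of $h(\bfa)$, yielding $f(\bfa) = g(\bfa)$ for every $\bfa \in S$, which is precisely equality of $f$ and $g$ as functions.

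There is really no obstacle here; the only point worth flagging is a choice of convention. The earlier discussion defines cancellativity relative to a subset, so if one wants the sharper statement that $\Fun(S,\tM)$ is cancellative with respect to a distinguished subset $\Fun(S,\tM_0)$ of functions taking values in a cancellative submonoid $\tM_0 \subseteq \tM$, the same argument applies verbatim: pointwise cancellation by $h(\bfa) \in \tM_0$ is legitimate by hypothesis on $\tM_0$. Thus the lemma is essentially a transportation of the monoid axiom through the evaluation maps $\mathrm{ev}_\bfa : \Fun(S,\tM) \to \tM$, which are monoid homomorphisms jointly separating points.
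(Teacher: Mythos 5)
Your argument is the standard componentwise verification, which is exactly what the paper invokes (it simply cites the ``easy componentwise verification'' from \cite[Lemma~7.3]{IKR4} without writing it out). Your remark about cancellativity relative to a subset is a sensible clarification consistent with the paper's conventions, but otherwise the approaches coincide.
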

\begin{proof} Easy componentwise verification, given in
\cite[Lemma~7.3]{IKR4}.\end{proof}

\begin{rem}\label{Fun10} When $\tM$  is a \semiring0, then $\Fun (S, \tM)$ is also a \semiring0  in the usual way
(via pointwise addition).\end{rem}

As customary, we write $f|_U$  for the restriction of a function $f \in \Fun (S, \tM)$ to a nonempty subset $U \subset S.$
Although failing to satisfy bipotence,  $
\FunSR$ does satisfy the weaker property for a \semiring0~$R$:

\medskip
\begin{rem}
 If $R$ is idempotent then so is $ \FunSR$, as seen by pointwise verification.
 \end{rem}

\begin{rem}\label{Fun2} Given any sets $S' \subseteq S$, there is
a natural onto homomorphism $\FunSR \to \FunSpR$ given by $f\mapsto
f|{_{S'}}.$
%
Our main interest in this paper is to study chains of these
homomorphisms.
For any homomorphism $\varphi: R \to R'$ and $\bfa
\in S,$ we can define the \textbf{evaluation homomorphism}
$$\psi_{\bfa,\varphi}:\FunSR \to R', \qquad f \mapsto
\varphi(f(\bfa)).$$ 
\end{rem}

The point of using $\nu$-domains is in the following observation:

\begin{rem}\label{functan} Given a \vdomain0 $R := (R, \tT, \tG, \nu)$, define
\begin{equation}\label{eq:abstang}
\begin{array}{rcl}
\tGFunSR &:= &\{ f\in \FunSR \ds: f(\bfa) \in \tG \text{ for all } \bfa \in S \}, \\[2mm] \stTFunSR &: = &\{ f \in \FunSR \ds: f(\bfa) \in \tT \text{ for all } \bfa \in S\}.
\end{array}
\end{equation}
 Then $(\FunSR, \stTFunSR, \tGFunSR, \nu)$
becomes a $\nu$-domain, the main object of this paper, where we
define $f^\nu$ by $f^\nu(\bfa) := f(\bfa)^\nu$. If $R$ is a
supertropical \domain0, then so is $\FunSR$, since $\nu$ induces an
onto map $ \stTFunSR\to  \tGFunSR$ .
\end{rem}

\begin{example}  The functions
of interest to us are the \textbf{polynomials}  in $\Lm := \{ \lm_1,
\dots, \lm_n\}$, defined by formulas in the elementary language
under consideration. $R[\La]$ denotes the usual polynomials over the
\semiring0 $R$. In our examples, $\rone = 0$, so we write $\la$ for
$0 \la.$

 If we adjoin the symbol $^{-1}$ (for multiplicative inverse), then
 we have the \textbf{Laurent polynomials}
  $R[\La^\pm]: = R [\lm_1, \dots, \lm_n, \lm_1^{-1}, \dots, \lm_n^{-1}
].$ If our language also includes the symbol $
\sqrt[m]{\phantom{w}},$ i.e., if we are working over a
power-cancellative, divisibly closed monoid, then we may consider
the polynomials $R[\La]_{\rat}$ with rational powers.

 We need to
study polynomials (in the appropriate context) and their roots, but
viewed in the above context as functions under the natural map given
by sending a polynomial $f$ to the function $\bfa \mapsto f(\bfa).$
 Thus, $\Pol (S,R)$ denotes the image in   $\Fun (S,R)$ of
  $R[\La]$,
 $\Lau (S,R)$ denotes the image of $R[\La^\pm],$ and $\Ratl (S,R)$ denotes
the image of  $R[\La]_{\rat}$.
  \end{example}

When $R$ is a supertropical \domain0, $\Pol (S,R)$, $\Lau (S,R)$,
and $\Ratl (S,R)$ are sub-$\nu$-\domains0 of $\Fun(S,R)$. (But their
$\nu$-structure differs from that of $\Fun(S,R)$  because of the
issue of tangibility, as we shall see.)

\subsection{Decompositions of polynomials} $ $

\emph{We assume throughout the remainder of this paper that $F =(F,
\tT, \tG, \nu)$ is a supertropical-\semifield0, with $\nu$ 1:1 and
onto, and we have a given tangible lift $\tG \to \tT$ given by
$\nu^{-1}$, and $S\subset F^{(n)}$ is given. $\tR$ denotes
$\Pol(S,F)$, $\Lau(S,F)$, or $\Ratl(S,F)$, and monomials and
polynomials are taken in the appropriate context. } Namely any
monomial has the form $h = \a \lm_1^{i_1}\cdots \lm_n^{i_n}$ for $\a
\in \tR$ and each $i_j$ in $ \Net,$ $\Z$,  or $\Q$ respectively . We
call $\lm_1^{i_1}\cdots \lm_n^{i_n}$ the \textbf{pure part} of $h$.
Note that if $h_1 = \a_1 \lm_1^{i_1}\cdots \lm_n^{i_n}$ and $h_2 =
\a _2 \lm_1^{i_1}\cdots \lm_n^{i_n}$ have the same pure part, then
$h_1 + h_2 = (\a_1 + \a_2)\lm_1^{i_1}\cdots \lm_n^{i_n}$ is also a
monomial.


\begin{rem}\label{Lau} Customarily one takes $\tR = \Pol(S,F)$, but  it is easy to check via localization at the $\la _i$ that the
definitions provide the same results for $\tR = \Lau(S,F)$.\end{rem}

\begin{defn}
 A \textbf{decomposition} of $f\in \tR$ is a sum $f := \sum_i h_i$
of monomials whose pure parts are distinct. (In other words, the number of monomials that
are summands of $f$ is minimal.)   The monomial~ $h_i$  is \textbf{essential} in $f$ at
$\bfa$   if $f|_U \neq (\sum_{j \neq i} h_j)|_U$ for some open neighborhood $U$ of $\bfa$.
A monomial $h_i$  is \textbf{essential} in~$f$ if it is {essential} in~$f$ at~ $\bfa$ for some point $    \bfa $.
  A polynomial is \textbf{essential}  if each monomial in its
  decomposition is essential.
 \end{defn}

 Thus, a  polynomial  $f$
is a tangible monomial iff it has no proper decomposition. (In
fact, this is an intrinsic way to define monomial.) We also need
to handle the case in which a monomial is not essential anywhere,
but does contribute to $f$ by taking on the same value at some
point.

\begin{defn} Decomposing a polynomial $f :=
\sum_i h_i$ as a sum of monomials, we say that an inessential
monomial $h_i$ of $f$ is \textbf{quasi-essential} at $\bfa$ if
$f(\bfa) \nucong h_i(\bfa).$  An inessential monomial $h_i$  is
\textbf{quasi-essential} in $f$ if it is quasi-essential in $f$ at
$\bfa$ for some point $    \bfa$.

The  \textbf{support}~ $\supp_{\bfa}(f)$ of $f = \sum_i h_i$ at
the point $\bfa \in S$
 is the set of monomials $h_i$ which  dominate $f$ at $\bfa$.  The  \textbf{support}~ $\supp(f)$ of $f$ is $\bigcup _{\bfa \in S}
 \supp_{\bfa}(f).$

The
\textbf{shell} of the decomposition of $f$ is the sum of the
essential monomials $h_i$
 in $\supp(f)$.
\end{defn}

\begin{example} The polynomial $f = \la ^2 + 6$ has the obvious decomposition as
written,  and is its own shell.
For the polynomial $f = \la ^2 + 3\la+ 6,$  the monomial $h = 3
\la$ is quasi-essential, since $f(3) =   6^\nu$, whereas $h(3) =
 6$. 
  \end{example}

  \begin{example} The polynomial $g =2 \la^2 _1 +2 \la _2^2 + 0$  is the
  shell of~$f = 2\la^2 _1 + 2\la _2^2 + \la_1 \la_2 +0$, because $\la_1\la_2$ is dominated by $2\la_1^2 + 2\la_2^2.$
  \end{example}

 Example~\ref{badshell} below shows  how a monomial can be
quasi-essential at one point  but essential somewhere else.
%

\begin{lem}\label{multi1}
Any monomial $h$ is multiplicative along any line, in the sense
that
$$h(\bfa^t \bfb^{1-t} ) = h(\bfa)^t h(\bfb)^{1-t}$$
 for all $t \in \Real$.
\end{lem}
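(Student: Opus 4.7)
The plan is to prove this by a direct computation, exploiting the multiplicative form of a monomial and the fact that $\tT$ (hence its divisible closure) is a power-cancellative Abelian group, so that arbitrary real powers are well-defined on the coordinates of $\bfa, \bfb \in S \subset F^{(n)}$ and obey the usual exponent laws. Implicit in the statement is the interpretation $\bfa^t \bfb^{1-t} = (a_1^t b_1^{1-t}, \dots, a_n^t b_n^{1-t})$; this is the image of the affine parameter $t$ along the ``multiplicative line'' from $\bfb$ (at $t=0$) to $\bfa$ (at $t=1$), which tropically is just the usual line segment once one passes through the order-preserving isomorphism with $(\tG,+)$.

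First I would write the monomial in its general form $h = \alpha\,\lambda_1^{i_1}\cdots\lambda_n^{i_n}$, with $\alpha \in \tR$ and exponents $i_j$ in $\Net$, $\Z$, or $\Q$ as appropriate (Remark~\ref{Lau} and the running conventions cover these three cases uniformly). Next I would evaluate both sides. On the left,
\[
h(\bfa^t \bfb^{1-t}) \;=\; \alpha\prod_{j=1}^n \bigl(a_j^{\,t}\,b_j^{\,1-t}\bigr)^{i_j}
\;=\; \alpha\prod_{j=1}^n a_j^{\,t i_j}\,b_j^{\,(1-t)i_j},
\]
using $(xy)^{i_j} = x^{i_j}y^{i_j}$ in the Abelian group $\tT$ and the exponent law $(x^{t})^{i_j} = x^{t i_j}$. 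On the right,
\[
h(\bfa)^{t}\,h(\bfb)^{1-t}
\;=\; \Bigl(\alpha\prod_j a_j^{i_j}\Bigr)^{\!t}\Bigl(\alpha\prod_j b_j^{i_j}\Bigr)^{\!1-t}
\;=\; \alpha^{t}\alpha^{1-t}\prod_j a_j^{t i_j}\,b_j^{(1-t) i_j},
\]
and $\alpha^{t}\alpha^{1-t}=\alpha^{t+(1-t)}=\alpha$, so the two expressions agree.

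There is no real obstacle; the only subtle point is justifying that the exponent laws $(xy)^s = x^s y^s$ and $(x^s)^r = x^{sr}$ are valid for real exponents. This is exactly the content of the divisible closure construction of Remark~\ref{divcl00} together with Lemma~\ref{divcl01}: since $\tT$ is a power-cancellative cancellative Abelian monoid (in fact, a group, as $F$ is a supertropical \semifield0), it embeds in a power-cancellative $\Net$-divisible Abelian group $\widetilde{\tT}$, on which rational powers are unambiguously defined and satisfy the usual laws; real powers are then handled analogously (or one can restrict $t$ to $\Q$, which is all that is needed in the sequel). Once these ambient identities are in hand, the computation above finishes the proof.
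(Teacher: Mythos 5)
Your proof is correct and is essentially the paper's own computation: write $h = \alpha\,\lambda_1^{i_1}\cdots\lambda_n^{i_n}$, expand $h(\bfa^t\bfb^{1-t})$ componentwise, and regroup using the exponent laws, splitting $\alpha = \alpha^t\alpha^{1-t}$. The paper presents this as a one-line calculation without the surrounding justification about divisible closure, but the core argument is identical.
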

\begin{proof} Write $h = \a \la_1^{i_1}\cdots \la_n^{i_n},$ $\bfa = (a_1,\dots, a_n)$, and $\bfb = (b_1,\dots, b_n)$.  Then
$$h(\bfa^t \bfb^{1-t} )
=\a (a_1^{t}b_1^{1-t})^{i_1}\cdots (a_n^{t} b_n^{1-t})^{i_n} =
\a^t a_1^{i_1t}\cdots a_n^{i_nt}\ \a^{1-t} b_1^{i_1(1-t)}\cdots
b_n^{i_n(1-t)} =
 h(\bfa)^t h(\bfb)^{1-t}.$$
\end{proof}

\begin{prop}\label{multi12} If two monomials $h_1$ and $h_2$ are equal at
two points $\bfa$ and $\bfb$  then they are equal
at every point in the line connecting $\bfa$ and $\bfb$.\end{prop}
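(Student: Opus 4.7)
The plan is to apply Lemma~\ref{multi1} to each of the two monomials separately and then chain the equalities. Since the conclusion is essentially a pointwise identity along the parametrized line $\bfa^t \bfb^{1-t}$ for $t \in \Real$, and multiplicativity along this line is exactly the content of the preceding lemma, the proof should reduce to a one-line computation.

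Concretely, I would fix an arbitrary $t \in \Real$ and form the point $\bfc := \bfa^t \bfb^{1-t}$ on the line connecting $\bfa$ and $\bfb$. By Lemma~\ref{multi1} applied to $h_1$ we get $h_1(\bfc) = h_1(\bfa)^t h_1(\bfb)^{1-t}$, and applied to $h_2$ we get $h_2(\bfc) = h_2(\bfa)^t h_2(\bfb)^{1-t}$. Using the two given equalities $h_1(\bfa) = h_2(\bfa)$ and $h_1(\bfb) = h_2(\bfb)$, the right-hand sides coincide, and hence $h_1(\bfc) = h_2(\bfc)$.

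The only subtlety is ensuring that the expressions $h_i(\bfa)^t$ and $h_i(\bfb)^{1-t}$ are meaningful for arbitrary real $t$; this is where the standing assumption that we work in an $\Net$-divisible, power-cancellative ordered Abelian group (so that rational and then real powers are defined, cf.\ the discussion around Remark~\ref{divcl00} and Lemma~\ref{divcl01}) is being invoked implicitly --- the same assumption already used in Lemma~\ref{multi1}. Since the statement and its proof are really just a restatement of Lemma~\ref{multi1} applied twice, I do not anticipate a genuine obstacle; the work is essentially already done upstream.
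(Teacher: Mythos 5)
Your argument is exactly the paper's: the paper's proof reads ``Follows at once from the lemma,'' and your spelled-out version (apply Lemma~\ref{multi1} to $h_1$ and $h_2$ separately, then use $h_1(\bfa)=h_2(\bfa)$ and $h_1(\bfb)=h_2(\bfb)$ to equate the right-hand sides) is precisely what that one line is shorthand for. The remark about needing real/rational powers is a reasonable caveat but concerns the same hypotheses already implicit in Lemma~\ref{multi1}, so the approach matches the paper's.
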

\begin{proof} Follows at once from the lemma.
\end{proof}

\begin{prop}\label{multi2} If a monomial $h_1$ dominates $h_2$ at
two points $\bfa$ and $\bfb$ then $h_1$ dominates $h_2$ at every
point in the line connecting $\bfa$ and $\bfb$.\end{prop}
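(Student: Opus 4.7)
The plan is to reduce the claim to Lemma~\ref{multi1} combined with monotonicity of fractional powers in the ordered Abelian group underlying $F$. Specifically, for any point on the line connecting $\bfa$ and $\bfb$, written as $\bfa^t \bfb^{1-t}$ with $t \in [0,1]$ (this is the segment, in multiplicative notation, parametrizing convex combinations $t \log \bfa + (1-t) \log \bfb$), Lemma~\ref{multi1} gives, for $i = 1,2$,
$$h_i(\bfa^t \bfb^{1-t}) = h_i(\bfa)^t \, h_i(\bfb)^{1-t}.$$
Thus it suffices to establish the inequality $h_1(\bfa)^t h_1(\bfb)^{1-t} \ge h_2(\bfa)^t h_2(\bfb)^{1-t}$ from the two given inequalities $h_1(\bfa) \ge h_2(\bfa)$ and $h_1(\bfb) \ge h_2(\bfb)$.

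Next, I would invoke the standing assumption that $\tT$ may be replaced by its $\Net$-divisible, power-cancellative closure (Remark~\ref{divcl00} and Lemma~\ref{divcl01}), so rational (and hence real, by continuity along the line, or at least rational $t$, which is all one actually needs for the line parametrization over $\Q$) powers make sense and, by Lemma~\ref{divcl01}, preserve the total order on $\tG$: if $x \ge y$ in $\tT$ and $s \ge 0$, then $x^s \ge y^s$. Applying this with $s=t$ to $h_1(\bfa)\ge h_2(\bfa)$ and with $s=1-t$ to $h_1(\bfb)\ge h_2(\bfb)$, and then multiplying the two inequalities in the ordered monoid (compatibility of multiplication with the order is precisely \eqref{ogr1}), yields exactly what is needed.

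The argument is essentially a one-line consequence of Lemma~\ref{multi1}; the only point of care is ensuring that powers $h_i(\bfa)^t, h_i(\bfa)^{1-t}$ respect the order, which is why the $\Net$-divisible, power-cancellative setting is the correct framework. I do not anticipate a real obstacle here beyond the bookkeeping of restricting to $t \in [0,1]$ so that both exponents are non-negative; any point on the segment between $\bfa$ and $\bfb$ has this form, which is all that is needed for Proposition~\ref{multi2}.
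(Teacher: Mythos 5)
Your proof is correct and follows the same route as the paper: parametrize the segment as $\bfa^t\bfb^{1-t}$, apply Lemma~\ref{multi1} to each $h_i$, and combine the two power inequalities multiplicatively. You simply make explicit the appeal to the $\Net$-divisible, power-cancellative framework (Remark~\ref{divcl00}, Lemma~\ref{divcl01}) and to~\eqref{ogr1}, which the paper leaves implicit.
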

\begin{proof} Each point can  be written as $\bfa^t \bfb^{1-t}$
for $ 0 \le t \le 1,$ and so
$$h_1(\bfa^t \bfb^{1-t} )
=h_1(\bfa)^t h_1(\bfb)^{1-t}\ge_\nu h_2(\bfa)^t h_2(\bfb)^{1-t} =
h_2(\bfa^t \bfb^{1-t} ).$$
\end{proof}

\begin{defn}\label{tanright}  A polynomial $f \in \tR$
is \textbf{tangible} when all of the coefficients of its essential
monomials are tangible. $ \tTFunSF$ denotes the monoid of tangible
polynomials, and  $  \tGFunSF$ denotes the ideal of polynomials
whose essential monomials have ghost coefficients.
\end{defn}

\begin{rem}\label{tang2} This does not quite
match the definition of $\stTFunSR$ in Remark~\ref{functan},
cf.~\eqref{eq:abstang}. For example, taking $f = \la +2$ we have
$f(2) = 2^\nu.$ Later on, we cope with this difficulty by
considering evaluations on dense subsets, cf.~Definition~\ref{tang3}
below. This problem does not arise for monomials, so we can refer to
tangible monomials without ambiguity.
\end{rem}

\begin{lem} $\tTFunSF$ is a monoid, and $ (\tR, \tTFunSF , \tGFunSF, \nu)$ is a supertropical
\domain0.
\end{lem}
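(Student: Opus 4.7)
The plan is to verify, in turn, each of the axioms required to make $(\tR, \tTFunSF, \tGFunSF, \nu)$ a $\nu$-domain (Definition~\ref{super1}) and then upgrade to a supertropical $\domain0$ (Definition~\ref{super2}). I would organize it into four steps, with the bulk of the work going into step (i).

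\smallskip

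\noindent\emph{Step (i): $\tTFunSF$ is a multiplicative submonoid.} The identity is tangible. For closure, take $f = \sum_i h_i$ and $g = \sum_j h'_j$ in $\tTFunSF$ and form $fg = \sum_{i,j} h_i h'_j$. After combining monomials of equal pure part, each essential monomial $m$ of $fg$ is either a single product $h_i h'_j$, whose coefficient is tangible since it is a product of two tangible elements of $F$, or a sum of two or more products $h_{i_1}h'_{j_1}, h_{i_2}h'_{j_2},\dots$ with identical pure parts and tangible coefficients. In the latter case, when two tangible coefficients add to a ghost, the resulting monomial is forced to be inessential: by Propositions~\ref{multi1}--\ref{multi2}, the monomials $h_{i_1}h'_{j_2}$ and $h_{i_2}h'_{j_1}$ take the same $\nu$-values as the coalesced monomial along the line joining any two test points where domination changes, so at every point of $S$ at least one of these ``cross'' products dominates the ghost-coefficient combination. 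Hence it does not alter the value of $fg$ on any open neighborhood, contradicting essentiality. This case analysis is the main obstacle, since it requires the combinatorics of tropical domination to cooperate exactly with the decomposition.

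\smallskip

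\noindent\emph{Step (ii): $\tGFunSF$ is a partially ordered \semiring0 ideal.} Closure under addition is immediate, because adding two polynomials all of whose essential monomials have ghost coefficients produces only ghost coefficients (a ghost plus anything is ghost). For the ideal property, let $g \in \tGFunSF$ and $f \in \tR$; any essential monomial of $fg$ has a coefficient of the form $\alpha \beta$ where $\beta \in \tG$, so lies in $\tG$. The partial order is inherited pointwise from $F$.

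\smallskip

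\noindent\emph{Step (iii): The $\nu$-domain conditions.} The ghost map $f \mapsto f^\nu$, defined pointwise by $f^\nu(\bfa) := f(\bfa)^\nu$, lands in $\tGFunSF$ (all coefficients become ghost, so all essential monomials do) and satisfies $\nu^2 = \nu$. The identities $f + g = f$ when $f \nug g$ and $f + g = f^\nu$ when $f \nucong g$ follow pointwise from the corresponding identities in $F$. Commutativity of the multiplicative monoid is pointwise, and cancellativity with respect to $\tTFunSF$ reduces to cancellativity in $F$: if $fh \equiv gh$ with $h \in \tTFunSF$ and $h \neq 0$ somewhere, pick a point where some essential tangible monomial of $h$ dominates and apply cancellativity in $F$, extending by Lemma~\ref{cancext} and the essential-support structure of $h$.

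\smallskip

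\noindent\emph{Step (iv): Supertropical upgrade.} I need that $\tGFunSF$ is totally ordered (within the ghost part) and that $\nu|_{\tTFunSF}\colon \tTFunSF \to \tGFunSF$ is onto. Surjectivity is obtained by lifting: given $g \in \tGFunSF$ with decomposition $g = \sum_i h_i$ having ghost coefficients $\alpha_i^\nu$, replace each $\alpha_i^\nu$ by its tangible lift $\hat{\alpha_i} \in \tT$ (available because $F$ is a supertropical \semifield0 with $\nu$ bijective on $\tT$) to obtain $\tilde g \in \tTFunSF$ with $\tilde g^\nu = g$; one checks that essentiality of the monomials is preserved under this lift, since it only changes tangibility, not the $\nu$-values. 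The ordering on $\tGFunSF$ is inherited from the total order on $\tG$ pointwise.
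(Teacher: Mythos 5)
Your proof matches the paper's in structure but is substantially more detailed; the paper disposes of the whole lemma in three lines, the crux being the unjustified assertion that ``the essential monomials of $fg$ are products of essential monomials and thus tangible.'' Your Step~(i) supplies the missing argument, and it is exactly the right one: when two products $h_{i_1}h'_{j_1}$, $h_{i_2}h'_{j_2}$ share a pure part $\mu$ and their coefficients coalesce to a ghost, the cross products $h_{i_1}h'_{j_2}$, $h_{i_2}h'_{j_1}$ satisfy $h_{i_1}h'_{j_2}\cdot h_{i_2}h'_{j_1}=h_{i_1}h'_{j_1}\cdot h_{i_2}h'_{j_2}$ pointwise, so by the tropical AM--GM consequence of Propositions~\ref{multi1}--\ref{multi2} at least one cross product $\nu$-dominates the coalesced term at every point; hence the ghost-coefficient monomial cannot be essential. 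This genuinely earns the paper's claim rather than restating it.

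Two smaller points. First, in Step~(iii) your treatment of cancellativity with respect to $\tTFunSF$ has the same soft spot as the paper's ``restrict Remark~\ref{functan}'': Remark~\ref{functan} and Lemma~\ref{cancext} give cancellation with respect to $\stTFunSR$ (functions tangible \emph{everywhere}), whereas the lemma needs cancellation with respect to the strictly larger $\tTFunSF$. From $fh=gh$ with $h\in\tTFunSF$ you get $f=g$ on the dense open set where $h$ is tangible, and Remark~\ref{adm0} then gives $f\nucong g$ everywhere; but to conclude $f=g$ on the complement one still has to rule out the possibility that $f(\bfa)\in\tT$ while $g(\bfa)\in\tG$ off the dense set. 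This does hold for $S=F^{(n)}$ (a facet argument in the spirit of Proposition~\ref{admis}: if $g$ had a second tying monomial at $\bfa$, that monomial would strictly dominate on one side of the tie-hyperplane, meeting the dense open set and forcing $g\ne f$ there), but neither you nor the paper actually supplies it, so ``extending by Lemma~\ref{cancext} and the essential-support structure of $h$'' is a placeholder rather than a proof. Second, in Step~(iv) your parenthetical that $\tGFunSF$ is totally ordered is false: $\la^\nu$ and $(\la^2)^\nu$ are incomparable pointwise. The paper shares this looseness with Definition~\ref{super2} (and with its treatment of $\Fun(S,R)$ in Remark~\ref{functan}); what is actually used and what you correctly verify is only that $\nu|_{\tTFunSF}\colon\tTFunSF\to\tGFunSF$ is onto via the tangible lift, which matches the paper's ``by inspection.''
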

\begin{proof} For $f,g \in \tTFunSF$, the essential monomials of $fg$ are products of essential monomials
and thus tangible.  Clearly  $\tR$ is a \vdomain0, seen by
restricting Remark~\ref{functan}, and $\nu_{\tTFunSF}$ is onto, by
inspection.
\end{proof}

 Given a monomial $h = \a \la_1^{i_1}\cdots
\la_n^{i_n},$ we write $\hth$ for $ \widehat \a \la_1^{i_1}\cdots
\la_n^{i_n},$ and  for the decomposition $f = \sum_i h_i$ we write
$\htf$ for $ \sum_i \hth_i$ --  the tangible lift of $f.$

\section{Supertropical $ \mfC(\tR) $-varieties}\label{cancel}

We work over a \vsemifield0 $F = (F, \tT, \tG, \nu)$, and fix a
subset $S \subseteq F^{(n)}.$ Recall that $\tR$ denotes $\Pol(S,F)$,
$\Lau(S,F)$, or $\Ratl(S,F)$, and monomials are taken in the
appropriate context.
 In principle, we want to designate a
family $\mfA(\tR)$ of tropical algebraic subsets of $S $ with
respect to elements of $\tR$. An algebraic set then is
$\mfA(\tR)$-\textbf{irreducible} if it cannot be written as the
proper union of two $\mfA(\tR)$-algebraic sets, and
 $\mfA(\tR)$ is \textbf{Noetherian} if every descending chain of $\mfA(\tR)$-algebraic sets
 stabilizes. In this section we deal with the supertropical version.

\subsection{Supertropical algebraic sets}\label{supertrop1}

\begin{defn}
%
%
%

Take some set $S \subseteq F^{(n)}$. An element $\bfa \in \tSS$ is a
\textbf{corner root} of $f \in \tR$  if $   \htf(\htbfa)\in \tG$.
The \textbf{(affine) corner locus}
 of
$f$
 with respect to  the set $\tSS$ is
$$\tZ_\corn (f;\tSS) : = \{ \bfa \in \tSS \ds : \bfa
\text{ is a corner root of }  f \}.$$  We write $\tZ_\corn (f)$ for
$\tZ_\corn (f;F^{(n)})$.
  The \textbf{total locus}
 of $f$ is
$$\tZ (f;\tSS) : = \{ \bfa \in \tSS \ds : f(\bfa)\in \tG
\}.$$
\end{defn}

\begin{defn}\label{cong}

 The \textbf{(affine) corner
algebraic set} and the \textbf{(affine)  algebraic set} of a
non-empty subset $\mathcal I  \subseteq \tR$, with respect to the
set $\tSS$,
  are  respectively
$$\tZ_\corn (\mathcal I ;\tSS) : = \bigcap _{f\in \mathcal I } \tZ_\corn(f;\tSS), \qquad  \tZ (\mathcal I ;\tSS) : = \bigcap _{f\in \mathcal I } \tZ(f;\tSS).$$
%
  When $\tSS$ is unambiguous (usually $F^{(n)}$), we
write $\tZ_\corn(\mathcal I )$ and $\tZ(\mathcal I )$  for
$\tZ_\corn(\mathcal I ;\tSS)$ and $\tZ(\mathcal I ;\tSS)$
respectively.
\end{defn}

 \begin{example}\label{fiber} Given $\bfa = (a_1, \dots, a_n )\in F^{(n)},$ the corner algebraic
 set of the
non-empty subset $\{\la_1 +a_1, \dots, \la_n +a_n\} \subseteq \tR$
consists of all vectors $\nu$-equivalent to $\bfa$, i.e., the
$\nu$-fiber of $\bfa$, and could be considered as the $\nu$-analog
of a point. These are the minimal corner algebraic
 sets in $F^{(n)}.$
\end{example}

As usual, a \textbf{hypersurface} is the algebraic set of a single
polynomial. A \textbf{facet} of a hypersurface $X = \tZ(f)$, $f =
\sum_i h_i$ is a decomposition,
 is a maximal (with respect to inclusion) connected subset of $X$ contained in the hypersurface $\tZ(h_i + h_j)$ for some $h_i, h_j$  or $\tZ(h_i)$  (for a ghost monomial $h_i$). A \textbf{face} is a nonempty intersection of facets.
A \textbf{facet} of an algebraic set $X = \tZ(\tI) = \bigcap_{f \in \tI} \tZ(f)$ is a maximal connected subset $W \subseteq X$ contained in an intersection of facets of $\tZ(f)$, $f \in \tI.$


We want our varieties to be the irreducible algebraic sets, and
these should correspond to the irreducible congruences.
%
%
But there are subtleties that have to be dealt with. For $S \subset
F^{(n)}$ we write $S|_\tng$ for $S \cap \tT^{(n)}$, the
\textbf{\tngres } of $S$.

\begin{example}\label{int1} Let  $X_1$ be the tropical line  defined by the polynomial $\la
_1 + 1\la _2 + 1$ and  $X_2$ be the tropical curve  defined by the
polynomial  $\la _1 \la _2 + \la _1 + 0$, see Fig. \ref{fig:1}.  (This can be viewed as the
curve of the Laurent polynomial  $ \la _1^{-1} + \la _2 + 0$, which
is a flip of the tropical line, cf.~Remark~\ref{Lau}.) Then $(X_1
\cap X_2)|_\tng$ is just the segment $[0,1]$ on the $\la_1$-axis, so
we see that any segment can be obtained as a corner algebraic set.
This means that we will not have irreducible algebraic sets other than points,
unless we make a serious restriction on the algebraic sets that we
admit!
\end{example}

\begin{figure}[h]
\setlength{\unitlength}{0.5cm}
\begin{picture}(10,9)(0,0)
\grid
\Thicklines
\put(1.5,5){\line(1,0){4.5}}
\put(6,1.5){\line(0,1){3.5}}
\put(6,5){\line(1,1){2.5}}
{\red \put(6,5){\line(1,0){2.5}}
\put(5,5){\line(0,-1){3.5}}
\put(5,5){\line(-1,1){3.5}}
}
\end{picture} \vskip -4mm
\caption{}\label{fig:1}
\end{figure}

Likewise, any congruence defines its  algebraic set:

\begin{defn}
An element $\bfa \in \tSS$ is a \textbf{corner root}   of a pair
$(f,g) $ (for $f,g \in \tR$) modulo a congruence~$\Cong$,  if $
\htf(\htbfa) \equiv \htg(\htbfa) \in \tG$. The \textbf{(affine)
corner locus}
 of
$f\in \tR$
 with respect to  the set $\tSS$, modulo~$\Cong$, is
$$\tZ_\corn ((f,g);\tSS)_{\Cong}  : = \{ \bfa \in \tSS \ds : \bfa
\text{ is a corner root of }( f,  g) \}.$$  We write $\tZ_\corn
(f)_{\Cong}$ for $\tZ_\corn (f;F^{(n)})_{\Cong}$.
  The \textbf{total locus}
 of $(f,g)$, modulo~$\Cong$, is
$$\tZ ((f,g);\tSS)_{\Cong} : = \{ \bfa \in \tSS \ds : f(\bfa) \equiv g(\bfa)\in \tG
\}.$$
\end{defn}

\begin{defn}\label{modcong}
 The \textbf{(affine) corner
algebraic set} and the \textbf{(affine)  algebraic set} of a
non-empty subset $A \subseteq  \tR  \times
\tR$   modulo a congruence $\Cong$, with respect
to the set $\tSS$,
  are  respectively
$$\tZ_\corn (A;\tSS)_{\Cong}: = \bigcap _{(f,g) \in A} \tZ_\corn((f,g);\tSS)_{\Cong}, \qquad  \tZ (A;\tSS)_{\Cong} : = \bigcap _{(f,g)\in A} \tZ((f,g);\tSS)_{\Cong}.$$
%
  When $\tSS$ is unambiguous (usually $F^{(n)}$), we
write $\tZ_\corn(A)_{\Cong}$ and $\tZ(A)_{\Cong}$  for
$\tZ_\corn(A;\tSS)_{\Cong}$ and $\tZ(A;\tSS)_{\Cong}$ respectively.
\end{defn}

Note that any   (corner) algebraic set of a set $A \subseteq \tR$ is
a (corner) algebraic set of $A$ modulo the trivial congruence. Thus
Definition~\ref{modcong} encompasses Definition~\ref{cong}.

\begin{defn}\label{modcong1}
Given a family
$ \mfC (\tR)$ of congruences on $\tR$, we define a  $ \mfC (\tR) $-\textbf{(corner)
algebraic set} to be a  (corner) algebraic set modulo some
congruence in $ \mfC(\tR) $. A $ \mfC(\tR) $-(corner) algebraic set is
 $ \mfC(\tR) $-\textbf{irreducible} if it cannot be written as the union
 of two $ \mfC(\tR) $-(corner) algebraic sets. A  $ \mfC(\tR)
 $- \textbf{(corner) variety} is an irreducible $ \mfC(\tR) $-(corner) algebraic
 set.
\end{defn}

The $ \mfC (\tR)$-varieties are the basis for tropical geometry,
under the appropriate choice of $ \mfC (\tR)$.

\subsection{The Zariski topology}$ $

We continue with the appropriate version of the Zariski topology.
Each essential monomial of a polynomial defines an open set
comprised of the points at which it dominates the other monomials.
Let us formalize this notion.

%
%
\begin{defn}
For any decomposition $f = \sum_{i} h_i$ of a polynomial $f \in
\mcR$, define the \textbf{component}~ $D_{f,i}$ to be
\begin{equation}\label{Eq:Dif}
D_{f,i} := \{\bfa  \in S : \htf(\htbfa) = \hth_ i (\htbfa) \}.
    \end{equation}
    A component  $D_{f,i}$ is \textbf{tangible} if the monomial
    $h_i$ is tangible, i.e., the $h_ i (\htbfa)\in \tT $ for all $\bfa \in D_{f,i}.$

 We call ${h_ i}$ the \textbf{dominant summand} of $f$ on
$D_{f, i}$. The \textbf{weak topology} is comprised of the tangible
open sets generated by the components.
\end{defn}

(Note that these are open, because the dominant monomials change at
the closure.) But this is not the topology that we want to work
with, since open sets need not be dense.

\begin{defn}
We define the \textbf{principal corner open sets} to be
$$\tD_{\corn}(f;S) = S \setminus \tZ_{\corn}(f;S) = \bigcup_{i \in I}
 D_{f,i},$$ taken over all components

Put another way, $$
\begin{array}{rcl}
\tD(f;S)_{\corn} & = &  \{ \bfa \in \tSS: \htf(\htbfa)= \hth_i(\htbfa) \text{
for some  unique monomial $h_i$ of
} f\}.
\end{array} $$
 \end{defn}
 The principal corner open sets form a base for a
topology
 on $\tSS$, which we call the \textbf{corner Zariski topology},
 whose closed sets are
affine corner algebraic sets.

We quote \cite[Proposition~9.4]{IKR4}:
\begin{prop}\label{top0} The intersection of two principal corner open sets contains
 a nonempty principal corner open set. Hence, the
principal corner open sets form a base of a topology on $\mcR$, in
which every open set is dense.
\end{prop}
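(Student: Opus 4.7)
The plan is to prove the stronger statement that for any nonempty principal corner open sets $\tD_\corn(f;S)$ and $\tD_\corn(g;S)$, the principal corner open set $\tD_\corn(fg;S)$ is itself nonempty and contained in their intersection. Density of every nonempty open set is then automatic, as two nonempty basic opens must meet.

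For the inclusion $\tZ_\corn(f;S) \cup \tZ_\corn(g;S) \subseteq \tZ_\corn(fg;S)$, take $\bfa\in\tZ_\corn(f;S)$, so that two monomials $\hth_{i_1},\hth_{i_2}$ in the decomposition of $\htf$ attain the $\nu$-maximum at $\htbfa$. Choose any monomial $\hth'_{j^\ast}$ of $\htg$ attaining $\nu(\htg(\htbfa))$. Since the pure parts $\bfi_{i_1}+\bfj'_{j^\ast}$ and $\bfi_{i_2}+\bfj'_{j^\ast}$ are distinct, they index two separate monomials in the canonical decomposition of $fg$, and by \lemref{multi1} each evaluates at $\htbfa$ to a value with $\nu$-value $\nu(\htf(\htbfa))\,\nu(\htg(\htbfa))$, which is the multiplicatively-induced $\nu$-maximum of $\widehat{fg}(\htbfa)$ by \lemref{multi1} applied to every other monomial pair. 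Two tied maximal monomials in the tangible lift give a ghost sum, so $\widehat{fg}(\htbfa)\in\tG$, i.e., $\bfa\in\tZ_\corn(fg;S)$; the case $\bfa\in\tZ_\corn(g;S)$ is symmetric. One subtle point here is that several pairs $(i,j)$ may collapse into the same pure part $\bfi_{i_1}+\bfj'_{j^\ast}$, potentially giving a ghost coefficient in $fg$; but since lifting restores a tangible coefficient of the same $\nu$-value, this does not affect the conclusion that two distinct monomials of $\widehat{fg}$ tie at $\htbfa$.

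For nonemptiness of $\tD_\corn(fg;S)$, note that $\tZ_\corn(fg;S)$ is a finite union of bend walls of the max-plus polynomial $fg$, each of codimension at least $1$ in the $\nu$-topology on $F^{(n)}$, so its complement is $\nu$-open dense in $F^{(n)}$, and nonempty whenever $S$ is $\nu$-dense enough to meet it. If either $f$ or $g$ is a single monomial, the corresponding $\tZ_\corn$ is empty and the argument trivializes. The main obstacle is precisely this density step: one must justify that the ambient $S$ meets the complement of finitely many codimension-one tropical loci, which is automatic for $S=F^{(n)}$ and is built into the standing hypotheses on $S$ used throughout the paper.

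With the base axiom established, any two nonempty principal corner open sets meet, since their intersection contains the nonempty $\tD_\corn(fg;S)$. Consequently, every nonempty open set in the corner Zariski topology is $\nu$-dense, as claimed.
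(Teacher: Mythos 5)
The paper does not reprove this statement but cites \cite[Proposition~9.4]{IKR4}, so there is no in-text proof to compare against; I am evaluating your argument on its own merits.

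Your core idea is the right one: the containment $\tD_\corn(fg;S)\subseteq\tD_\corn(f;S)\cap\tD_\corn(g;S)$, equivalently $\tZ_\corn(f;S)\cup\tZ_\corn(g;S)\subseteq\tZ_\corn(fg;S)$, and the verification of that containment is essentially sound. A point where the reasoning is stated correctly but justified by the wrong source: you cite Lemma~\ref{multi1} for the fact that $\nu(\widehat{fg}(\htbfa))=\nu(\htf(\htbfa))\cdot\nu(\htg(\htbfa))$. Lemma~\ref{multi1} asserts multiplicativity of a \emph{single} monomial along a line $\bfa^t\bfb^{1-t}$; it says nothing about products of polynomials. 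What you actually need is the elementary observation that for a tangible monomial $\hth_i\hth'_j$, $\nu((\hth_i\hth'_j)(\htbfa))=\nu(\hth_i(\htbfa))\,\nu(\hth'_j(\htbfa))$, together with the fact that the $\nu$-maximum over pairs factors as the product of the two maxima, and that the tangible lift preserves $\nu$-values of coefficients (even when several pairs $(i,j)$ collapse onto the same pure part). You do eventually say all of this, so the substance is right, but the citation is a red herring.

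The genuine gap is the nonemptiness of $\tD_\corn(fg;S)$, which you acknowledge but do not resolve. Saying that $\tZ_\corn(fg;S)$ is a finite union of codimension-one walls whose complement is $\nu$-dense is a statement about $F^{(n)}$, not about an arbitrary $S\subseteq F^{(n)}$. For general $S$ the proposition as stated is simply false: take $n=1$, $S=\{0,1\}\subset\tT$, $f=\la+0$, $g=\la+1$; then $\tD_\corn(f;S)=\{1\}$ and $\tD_\corn(g;S)=\{0\}$ are both nonempty, yet $\tD_\corn(fg;S)=\emptyset$. So the density hypothesis on $S$ is not merely ``built into the standing hypotheses,'' it is an essential restriction, and a complete proof must either take $S=F^{(n)}$ (in which case a generic-point argument, e.g.\ $\bfa=(t,t^2,\dots,t^n)$ with $t$ avoiding finitely many ties, does the job) or state and use an explicit density requirement on $S$. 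Flagging the issue is good mathematical hygiene, but it is not yet a proof of the nonemptiness step, and that step is precisely what makes every open set dense.
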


From now on, we use this topology, and its relative topology on any
subset $S$ of $F^{(n)}$.

\subsection{Tangible polynomials}\label{supertrop2}$ $

The naive choice for tangibles, $ \stTFunSF$, cf. Remark
\ref{functan}, would not include polynomials (except tangible
constants) since they all have corner roots and thus are not in $
\stTFunSF$. The Zariski topology gives us a better $\nu$-structure
for polynomials, which matches Definition~\ref{tanright}.

\begin{defn}\label{tang3} A function $ f\in Fun(S,R)$ is \textbf{tangible} over $S$
if  $\{ \bfa \in S : f(\htbfa)\in \tT\}$ is dense under the
relative Zariski topology on $S$ induced from $\tR$. $\tTFunSF
$ is the set of tangible polynomials of $\tR$, and $$\tGFunSF := \{
f \in \tR : f(\htbfa) \in \tG \text{ for all } \bfa \in S \; \}$$ is the
set of ghost elements of $\tR$.
\end{defn}

\begin{remark}\label{rmk:4.12}
Any polynomial $f\in  \tTFunSF$ is  tangible over $F^{(n)}$.
Conversely, when $f$ is tangible over~$F^{(n)}$, its essential
monomials all must have tangible coefficients, since any
quasi-essential monomial is dominated by the other monomials on a
dense set.
\end{remark}

The next observation explains why we can
exclude the inessential monomials (even when quasi-essential) in the
shell of $f$.

\begin{lem}\label{goodshell} Suppose $f =  \sum_i h_i \in \tR$, written as a sum of
monomials,  and, for $\bfa \in S$, let $$\text{$f_\bfa :=  \sum_i
\big \{h_i: h_i$ is essential at $\bfa \big \}.$}$$ Then $f(\bfa)
=f_\bfa(\bfa)$ in either of the following cases:
\begin{enumerate} \eroman \item $\bfa$ is an interior point  in the
$\nu$-topology, or \pSkip \item $f_\bfa(\bfa)\in \tG.$
\end{enumerate}
\end{lem}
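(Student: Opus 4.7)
The plan is to prove $f(\bfa) = f_\bfa(\bfa)$ by establishing both $\nu$-value equality and matching tangibility under each hypothesis. The inequality $f(\bfa) \geq_\nu f_\bfa(\bfa)$ holds automatically since $f_\bfa$ is obtained from $f$ by removing some summands, so it remains to establish the reverse direction together with the right ghost/tangible type.

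For case (i), I would let $v = f(\bfa)^\nu$ and $M = \{i : h_i(\bfa)^\nu = v\}$ be the set of $\nu$-maximal monomials at $\bfa$. By continuity of monomials (Lemma \ref{multi1}), any $h_i$ with $i \notin M$ is strictly $\nu$-dominated by some $h_j$, $j \in M$, throughout an interior neighborhood of $\bfa$, hence is non-essential at $\bfa$. When $|M| = 1$, the unique $\nu$-maximal $h_{i_0}$ strictly dominates in such a neighborhood and must be essential, giving $J = \{i_0\}$ and $f_\bfa(\bfa) = h_{i_0}(\bfa) = f(\bfa)$. When $|M| \geq 2$, the key claim is that at least two elements of $M$ are essential: if only $h_{i_0} \in M$ were essential, then $h_{i_0}$ would $\nu$-dominate every other $h_i \in M$ in a neighborhood of $\bfa$ with equality at $\bfa$, but the ratio $h_{i_0}/h_i$ is a non-constant log-linear function (since $h_{i_0}$ and $h_i$ have distinct pure parts in the decomposition) and cannot attain a local minimum at an interior point. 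Hence $|M \cap J| \geq 2$, which forces $f_\bfa(\bfa) = v^\nu$ (ghost), matching $f(\bfa) = v^\nu$.

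For case (ii), I would argue by contradiction: assume $f(\bfa)^\nu > f_\bfa(\bfa)^\nu$ (the case of equal $\nu$-value but tangible $f(\bfa)$ is ruled out, since $f(\bfa) \geq_\nu f_\bfa(\bfa)$ in the supertropical order requires $f(\bfa)$ to be ghost once $f_\bfa(\bfa)$ is). Pick $h_k$ with $h_k(\bfa)^\nu = f(\bfa)^\nu$ and $k \notin J$; non-essentiality supplies a neighborhood $U_k$ of $\bfa$ on which $f = f - h_k$, so in particular $f(\bfa) = (f - h_k)(\bfa)$. Comparing the two supertropical sums at $\bfa$ forces a second monomial $h_{k'}$ with $h_{k'}(\bfa)^\nu = f(\bfa)^\nu$, and one has $k' \notin J$ as well (else $f_\bfa(\bfa)^\nu \geq h_{k'}(\bfa)^\nu$). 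Since $h_k$ and $h_{k'}$ have distinct pure parts, their equality locus is a proper hyperplane. After shrinking $U_k \cap U_{k'}$ so that every essential monomial's $\nu$-value stays strictly below $v$ there, one picks $\bfb$ close to $\bfa$ and off this hyperplane: one of $h_k(\bfb), h_{k'}(\bfb)$ is then the unique $\nu$-maximal summand at $\bfb$, and removing it strictly decreases the supertropical sum, contradicting $f|_{U_k} = (f - h_k)|_{U_k}$.

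The main obstacle will be securing the witness $\bfb$ inside $S$ for case (ii) at boundary points of $S$, where $S$ might not cross the hyperplane $\{h_k = h_{k'}\}$ near $\bfa$. The ghost hypothesis $f_\bfa(\bfa) \in \tG$ is precisely what supplies the needed slack here: it tells us the essential monomials at $\bfa$ already realize a ghost value, and this prevents the degenerate configuration in which $S$ lies tangentially inside $\{h_k = h_{k'}\}$ near $\bfa$, which would otherwise be compatible with non-essentiality of both $h_k$ and $h_{k'}$.
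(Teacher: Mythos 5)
Your case (i) is essentially the paper's argument, just reorganized. Both proofs hinge on the same geometric fact: at an interior point of $S$ (in the $\nu$-topology), a non-constant log-linear ratio of two monomials with distinct pure parts cannot attain a one-sided local extremum, so if several monomials tie for the $\nu$-maximum at $\bfa$ then more than one of them must be essential (and if a single monomial strictly dominates, it is the unique essential one). The paper runs this as a contradiction starting from $f(\bfa)\neq f_\bfa(\bfa)$, identifying the unique tangible essential summand $h_i$ and a quasi-essential competitor $h$; you run it as a direct case split on $|M|$. One point you should tighten: you assert that if only $h_{i_0}\in M$ were essential then $h_{i_0}$ would $\nu$-dominate every other $h_i\in M$ on a neighborhood. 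What inessentiality directly gives is that no $h_i$ with $i\neq i_0$ is ever the \emph{unique} $\nu$-maximal monomial near $\bfa$; passing from that to pointwise $\nu$-domination by $h_{i_0}$ needs a short density-plus-continuity step (the tie loci of distinct monomials are nowhere dense, so on the dense complement $h_{i_0}$ is strictly maximal, and then continuity of $\nu$-values gives the inequality on the whole neighborhood).

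Your case (ii) departs from the paper and, as you yourself flag, is not complete. Your plan tries to prove $f(\bfa)^\nu = f_\bfa(\bfa)^\nu$ by producing a witness $\bfb$ near $\bfa$ off the hyperplane $\{h_k=h_{k'}\}$; but that is exactly the interior-point manoeuvre from case (i), and it genuinely fails when $\bfa$ is a boundary point of $S$ (for instance when $S$ lies inside $\{h_k=h_{k'}\}$ near $\bfa$). Your final paragraph then asserts, without argument, that the hypothesis $f_\bfa(\bfa)\in\tG$ ``prevents the degenerate configuration.'' I do not see why: $f_\bfa(\bfa)\in\tG$ is a statement about the values taken by the essential monomials at $\bfa$, not about how $S$ sits relative to the tie locus $\{h_k=h_{k'}\}$. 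The paper uses the ghost hypothesis in a quite different place: taking for granted (via the density of the non-tie locus, cf.\ Proposition~\ref{top0} and Remark~\ref{adm0}) that $f$ and $f_\bfa$ can disagree only in tangibility, so that $f(\bfa)\in\{f_\bfa(\bfa),\,f_\bfa(\bfa)^\nu\}$, the hypothesis $f_\bfa(\bfa)\in\tG$ simply collapses these two options to one, since then $f_\bfa(\bfa)^\nu=f_\bfa(\bfa)$. No nearby witness is needed, and in particular no interiority. That one-line collapse is the idea missing from your proposal, and the contradiction machinery you set up for (ii) should be discarded in its favor.
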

\begin{proof} (i) Otherwise, $f(\bfa) \ne f_\bfa(\bfa)$ would imply
$f_\bfa(\bfa)$ would be tangible, i.e., there would be only one
monomial $h_i$ essential at $\bfa$, for which $h_i(\bfa)= f(\bfa)$.
But the assumption that $\bfa$ is an interior point implies that any
quasi-essential monomial $h$ at $\bfa$ satisfies $h(\bfb)>_\nu
h_i(\bfb)$ for some $\bfb$ in a neighborhood of $\bfa,$ and taking
$\bfb$ near enough to $\bfa$ yields $f(\bfb) = h_i(\bfb) \le_\nu
h(\bfb),$ contrary to the definition of quasi-essential.

(ii) Either $f(\bfa) = f_\bfa(\bfa)$ or   $f(\bfa) =
f_\bfa(\bfa)^\nu = f_\bfa(\bfa).$
\end{proof}

\section{The coordinate \semiring0}\label{coor}$ $

We return from tropical geometry to algebra via the coordinate
\semiring0, just as in classical algebraic geometry.

\begin{defn}\label{coord2} The \textbf{coordinate \semiring0}
of an affine   algebraic set $X \subseteq F^{(n)}$, denoted~$F[X],$
is  the  image of the \semiring0 map $\Pol(F^{(n)},F)\to\Fun(X,F)$
given by the natural restriction   $f \mapsto f|_X$. The
\textbf{Laurent coordinate \semiring0} $F[X^\pm]$ is the image of
$\Lau(F^{(n)},F)$ in $\Fun(X,F)$. (Similarly, we could define
$F[X]_{\rat}$ to be image of $\Ratl(F^{(n)},F)$ in $\Fun(X,F).$)
\end{defn}



\begin{prop}\label{goodshell1} Any polynomial $f \in F[X]$ has the
same image on the interior of $X$ as its shell in $\Fun(X,F).$
\end{prop}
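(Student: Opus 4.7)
My plan is to derive the identity from Lemma~\ref{goodshell}(i) by checking that each shell monomial which is not essential at the given interior point $\bfa$ contributes nothing to $s(\bfa)$.

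First I would fix a decomposition $f = \sum_i h_i$ and write $s$ for its shell, i.e.\ the subsum of the essential monomials lying in $\supp(f)$. For an interior point $\bfa$ of $X$ in the $\nu$-topology, set $f_\bfa := \sum\{h_i : h_i \text{ essential at } \bfa\}$, so that $f_\bfa$ is a subsum of $s$; write $s = f_\bfa + g$, where $g$ collects those essential monomials that are not essential at $\bfa$. Lemma~\ref{goodshell}(i) gives $f(\bfa) = f_\bfa(\bfa)$, so the proposition reduces to the assertion $s(\bfa) = f_\bfa(\bfa)$, i.e.\ that $g(\bfa) + f_\bfa(\bfa) = f_\bfa(\bfa)$.

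Next I would verify this identity summand by summand. Let $h$ be a monomial summand of $g$. Since $h$ is a summand of $f$, we have $h(\bfa) \nule f(\bfa) = f_\bfa(\bfa)$. If $h(\bfa) \nul f_\bfa(\bfa)$, the supertropical rule $a + b = a$ for $a \nug b$ immediately yields $h(\bfa) + f_\bfa(\bfa) = f_\bfa(\bfa)$. Otherwise $h(\bfa) \nucong f(\bfa)$, making $h$ quasi-essential at $\bfa$. The inessentiality of $h$ at $\bfa$ provides a neighborhood $U$ of $\bfa$ on which $f$ coincides with $f - h$; evaluating at $\bfa$ together with $h(\bfa) \nucong f(\bfa)$ forces $(f-h)(\bfa)$ to be $\nu$-equivalent to $h(\bfa)$, and then the supertropical rule $a + b = \nu(a)$ when $a \nucong b$ pushes $f(\bfa)$, and hence $f_\bfa(\bfa)$, into the ghost ideal $\tG$. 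Once $f_\bfa(\bfa)\in \tG$, adding the $\nu$-equivalent element $h(\bfa)$ leaves the value unchanged, so again $h(\bfa) + f_\bfa(\bfa) = f_\bfa(\bfa)$. Iterating over the finitely many summands of $g$ yields $s(\bfa) = f_\bfa(\bfa) = f(\bfa)$, as desired.

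The main technical obstacle is the quasi-essential subcase: one must use inessentiality of $h$ at $\bfa$ to locate another summand of matching $\nu$-value, thereby forcing $f(\bfa)$ to lie in $\tG$. After that observation the supertropical absorption rules collapse every remaining contribution to $s(\bfa)$ automatically, and the interior hypothesis enters only through Lemma~\ref{goodshell}(i).
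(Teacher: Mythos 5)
Your proof is correct and follows the same route as the paper, which simply invokes Lemma~\ref{goodshell} with no further detail. You fill in the step the paper leaves implicit — namely passing from $f_\bfa$ (monomials essential \emph{at} $\bfa$) to the shell $s$ (monomials essential \emph{somewhere}) — and your case analysis for the extra shell summands $h$ with $h(\bfa)\nucong f(\bfa)$ is exactly the needed absorption argument: inessentiality at $\bfa$ forces $f(\bfa)=(f-h)(\bfa)$, whence $f(\bfa)=(f-h)(\bfa)+h(\bfa)=((f-h)(\bfa))^\nu\in\tG$, and the added $\nu$-equivalent summand is then absorbed. (A minor terminological quibble: ``quasi-essential'' in the paper is reserved for \emph{inessential} monomials of $f$, while your $h$ is essential elsewhere; but the substance of your argument does not depend on that label.)
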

\begin{proof} We use Lemma~\ref{goodshell} to remove all the
inessential monomials.
\end{proof}

We have a $\nu$-structure induced by functions. Define
$F[X]_{\operatorname{tng}}$  to be those polynomials which are
tangible in the sense of Definition~\ref{tang3}, and
 $F[X]_{\operatorname{gh}}$ to be the restriction of
$\tGFunSF$  to $X$.

\begin{lem} $F[X]_{\operatorname{tng}}$ is a monoid, and $ (F[X], F[X]_{\operatorname{tng}} , F[X]_{\operatorname{gh}}, \nu)$ is a $\nu$-\semiring0.
Likewise for $F[X^\pm]$  and $F[X]_{\rat}$. \end{lem}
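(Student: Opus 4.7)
The plan is to verify each of the defining axioms of a $\nu$-semiring$^\dagger$ (cf.\ Definition~\ref{super1}) for the triple $(F[X], F[X]_{\operatorname{tng}}, F[X]_{\operatorname{gh}}, \nu)$, essentially by restricting the $\nu$-structure on $\Fun(X,F)$ from Remark~\ref{functan} and checking it is compatible with the tangibility condition of Definition~\ref{tang3}.

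First, I would verify that $F[X]_{\operatorname{tng}}$ is a multiplicative submonoid. Given $f, g \in F[X]_{\operatorname{tng}}$, the sets
\[
U_f = \{\bfa \in X : f(\htbfa) \in \tT\}, \qquad U_g = \{\bfa \in X : g(\htbfa) \in \tT\}
\]
are dense in the relative corner Zariski topology on $X$. By Proposition~\ref{top0}, open sets in this topology are dense and are closed under finite intersection, so $U_f \cap U_g$ is again a dense open subset of $X$. Since $\tT$ is a multiplicative submonoid of $F$, on $U_f \cap U_g$ we have $(fg)(\htbfa) = f(\htbfa)g(\htbfa) \in \tT$, which shows $fg \in F[X]_{\operatorname{tng}}$. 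The constant function $\one$ is trivially tangible, so $F[X]_{\operatorname{tng}}$ is a monoid.

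Next, I would verify that $F[X]_{\operatorname{gh}}$ is a partially ordered semiring$^\dagger$ ideal. Closure under addition is immediate since $\tG$ is closed under addition in $F$. If $f \in F[X]$ and $g \in F[X]_{\operatorname{gh}}$, then for every $\bfa \in X$ we have $(fg)(\bfa) = f(\bfa)\cdot g(\bfa) \in \tG$, because $\tG$ is a semiring$^\dagger$ ideal of $F$; hence $fg \in F[X]_{\operatorname{gh}}$. The partial order is induced pointwise from the order on $\tG$, as in the preamble preceding Lemma~\ref{cancext}.

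Then I would define $\nu: F[X] \to F[X]_{\operatorname{gh}}$ pointwise via $f^\nu(\bfa) := f(\bfa)^\nu$; this function lies in $F[X]_{\operatorname{gh}}$ since its values are ghost, and $\nu^2 = \nu$ is inherited pointwise from $F$. The two axioms of Definition~\ref{super1}, namely $f + g = f$ when $f^\nu > g^\nu$ and $f+g = f^\nu$ when $f^\nu = g^\nu$, also follow pointwise from the corresponding identities in $F$. The same argument applies verbatim with $F[X]$ replaced by $F[X^\pm]$ or $F[X]_{\rat}$, since these sit inside $\Fun(X,F)$ in exactly the same fashion, and Proposition~\ref{top0} applies in all three contexts by Remark~\ref{Lau}.

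The only nontrivial point is the closure of $F[X]_{\operatorname{tng}}$ under multiplication: the naive pointwise definition of tangibility from Remark~\ref{functan} is too restrictive (polynomials acquire ghost values at their corner roots, cf.\ Remark~\ref{tang2}), which forces us to work with density rather than pointwise tangibility. The key leverage is Proposition~\ref{top0}, which ensures that dense open sets are stable under finite intersection so that the product of two generically tangible polynomials is again generically tangible.
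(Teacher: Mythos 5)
Your proof is correct and follows essentially the same route as the paper: the closure of $F[X]_{\operatorname{tng}}$ under multiplication is obtained by intersecting the two dense sets $U_f$ and $U_g$ and invoking Proposition~\ref{top0}, and the remaining $\nu$-structure axioms are inherited pointwise from $\Fun(X,F)$ via Remark~\ref{functan}. (The paper is terser, merely asserting that the intersection of two dense sets is dense, but the justification it has in mind is exactly the one you spell out — these are principal corner open sets, so Proposition~\ref{top0} applies; you might make explicit that $U_f = \tD_\corn(f;X)$ is open, not merely dense, since that is what makes the intersection argument legitimate.)
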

\begin{proof} For $f,g \in F[X]_{\operatorname{tng}}$,  $\{ \bfa \in S : f(\htbfa),g(\htbfa)\in \tT\}$ is the intersection of
two dense sets and thus is dense, implying $fg \in
F[X]_{\operatorname{tng}}$. The last assertion is clear by
restricting Remark~\ref{functan}.
\end{proof}

\begin{example} $F[X]_{\operatorname{tng}}$ is not supertropical,
since $\nu$ no longer is onto. Indeed,  let $X$ be the
supertropical line, e.g., consider the algebraic set of the polynomial $f =
\la_1 + \la _2 + 0.$ The restriction of $f$ to $X$ is ghost by
definition, and any tangible lift $\htf$ would have to include
either $\la_1 + \la _2 $ or $0$, seen by considering the vertical
and horizontal rays. But then the (tangible) diagonal ray must include $\la_1 +
\la _2 $ or $0 + \la _i$ for $i = 1$ or $i=2$, and then $\htf$
produces a ghost value  on one of the rays, contrary to it being
tangible on $X$.

Note that  $ \la_1^2 + \la _2^2 + 0$ ($= f^2$ as a function) does have the tangible lift
$\la_1 \la_2 + 0$ on $X$. Likewise, $f$ has the tangible lift
$\la_1^{\frac 12} + \la_2^{\frac 12} + 0$ on $F[X]_{\rat}$.
\end{example}

\begin{remark}\label{rmk:XinYcord}
When $X \subset Y$ we have a natural homomorphism  $ \Fun(Y,F)\to
\Fun(X,F)$ obtained by restricting the domain of the function from
$Y$ to $X$. This induces  natural homomorphisms  $ F[Y]\to F[X]$,
  $ F[Y^\pm]\to F[X^\pm],$ and $F[Y]_{\rat}\to F[X]_{\rat}.$
\end{remark}

  The restriction map gives rise to a congruence $\Cong$ on  $ F[Y]$, for which
  $ F[X] \cong  F[Y]/\Cong$.
  Conversely, we say that a congruence $\Cong$ on  $ F[Y]$ is
  \textbf{geometric} if $  F[Y]/\Cong \cong F[X]$ for some $X
  \subseteq Y.$ Then we have a 1:1 correspondence between geometric
  congruences and coordinate  \semirings0.

\begin{remark} Any geometric congruence $\Cong$ on $F[X]$
is a power-cancellative congruence which is cancellative with
respect to the tangible polynomials.
  \end{remark}

\begin{defn}\label{geomcong}
Given a subset $X \subset S$, the congruence $\Cong_X$ on $F[X]$,
called the \textbf{congruence of} $X$,
  is defined by the relation $$f \equiv_X g \qquad \text{iff} \qquad f(\bfa) = g(\bfa) \text{ for all } \bfa \in X,$$
  which we call a \textbf{polynomial relation} on $X$.
\end{defn}

  \begin{example}\label{exmpBinomial}
   If a monomial $h_i$ dominates
  $f$ and a tangible monomial $h'_j$ dominates~$g$ on some subset $W$ of $X$, then the polynomial
  relation on that subset
  is given by $h_i(\bfa) = h'_j(\bfa)$ for all $\bfa \in W,$
   which can be viewed as a Laurent relation
  $ \frac{h_i}{h'_j}(\bfa) = \fone$ on $W,$ and can be used in $W$ to
  eliminate any one variable appearing nontrivially.
  \end{example}

\begin{remark} $X$ is an algebraic set precisely when $\Cong_X$ is a geometric congruence.
Thus we have a 1:1 correspondence between algebraic sets and geometric congruences.
  \end{remark}
   An example of a non-geometric congruence:
\begin{example}
Define the congruence  $\Cong_1$  on $F[X]$  generated by
 $$ \text{$ \Cong_1 := \{(f,g): $  $f$ and $g$ both lack constant terms$\}.$}$$ Then the images in
$F[X]/\Cong_1$ of all constants are distinct, and we also have the
classes of $\la + \a$ for each~$\a \in F$. $F[X]/\Cong_1$ contains
one more class, comprised of all polynomials lacking constant terms.

Next, define  the congruence $\Cong_2$  on $F[X]$  generated by
$\Cong_1$ and $\{ (\a, \beta): \a, \beta \in F\}$. Then  $
F[X]/\Cong_2$ has only three elements: The classes of $\fone$,
$\la,$ and $\la +\fone$,
\end{example}

\begin{example}\label{phony}
Define $\Cong$  on $F[X]$ to be the congruence generated by some
pair $(f,g)$ where
  $f$ and $g$ both have the same leading monomial in $\la _1$.
 For example, take $(f,g) = (\la _1^2 + \la _2, \la _1^2 + \la_2\la_3) \in \Cong$. Then $\Cong$
 restricts to the trivial equivalence wherever $\la_2, \dots, \la_n$
 are specialized to elements small enough in relation to $\la _1.$
\end{example}

%

 The familiar
correspondence between  coordinate \semirings0 and algebraic sets is
discussed in \cite{IKR4}.  Lemma~\ref{cancext} shows that the
coordinate \semirings0 all are
{$\nu$-\domains0}. We want to single out those coordinate
\semirings0 corresponding to algebraic sets that have tropical
significance, and use these to define tropical dimension. This is an
extremely delicate issue, since various natural candidates for
tropical varieties fail to satisfy the celebrated ``balancing
condition'' \cite{IMS}.  For example, as is well known, the
intersection of the (standard) tropical lines defined by the
polynomials $\la _1 + \la _2 + 0$ and $\la _1 + \la _2 + a$ for
$a>0$ is just the ray given by $\la_1 = \la _2$ starting at $(a,a)$.
Thus, if we were to define a variety as the intersection of tropical
curves, we would have to cope with line segments of arbitrary
length. Likewise, the intersection of the curves defined by $\la_1 +
\la_2^k+0$  over $k \in \Net$ is just two perpendicular rays. So we
need   conditions to identify such degeneracies, preferably in terms
of polynomials.

\begin{rem}\label{adm0}
By Proposition~\ref{multi12}, if two monomials agree on a dense
subset of $X$, then they agree on~$X$. It follows that if two
polynomials $f$ and $g$ agree on a dense subset of $X$ then $f(\bfa)
\nucong g(\bfa)$ for all $\bfa \in X;$ in other words, their only
difference is in being ghost or not.
\end{rem}
\begin{defn}\label{admiss1} Two polynomials $f$ and $g$ \textbf{essentially agree} on  $X\subseteq F^{(n)}$ if   there is an open dense
subset   $U$ of   $X$ (in the relative topology obtained from the
 Zariski topology) for which $f|_U  = g|_U.$
The coordinate \semiring0 $F[X]$ is \textbf{\admissible} if it is a
\vdomain0 satisfying the following condition:
\begin{itemize}
  \item[$\divideontimes$] Any two
polynomials $f$ and $g$ that essentially agree on $X$ are equal.
\end{itemize}\end{defn}

 We now get to our main objective.

%

\begin{defn}\label{admiss11}  An  \textbf{admissible (corner) algebraic set} is a (corner) algebraic
set whose coordinate \semiring0\ is an \admissible\ $\nu$-\domain0.

$\mfCA$ is the set of geometric congruences corresponding to
admissible (corner) algebraic sets.
 \end{defn}

\begin{example}\label{badshell0}  Consider  the  surface  $X := \tZ_\corn(f)$ of the polynomial
$f = \lm_1 + \lm_2 + \lm_3 +0$ in $F^{(3)}$, where we erase the
facets contained in the hyperplanes determined by  $\lm_1=\lm_2$ and
$\lm_3=0$, and take the closure.
  Then the functions $\lm_1+ \lm_2 $ and $\lm_3 + 0$ are the same on all
  points
  except $(\a,\a,\a)$ for $\a >_\nu 0$ and  $(0,\bt,0)$ for $\bt <_\nu 0$, where $\a, \bt \in \tT$, for which one side
  is
 ghost and the other tangible. Thus, $\lm_1+ \lm_2 $ and $\lm_3 + 0$ essentially agree on $X$, and $X$ is not admissible.
  \end{example}

\begin{example}\label{badshell1}
Let $X$ be the hypersurface defined by the tangible polynomial $f = \sum_i
h_i,$ written as a sum of at least 3 monomials, and let $X'$ be
obtained by erasing the set   $\{ \bfa \in X \ds: h_1(\bfa) =
h_2(\bfa)>_\nu h_i(\bfa), i \ge 3\}$ and taking the closure.
(Renumbering the $h_i$ if necessary, we may assume that this set is
nonempty.)  Let $f_k = \sum _{i\ne k} h_i,$ for $k = 1,2$. Then
$f_k$ is ghost on every facet of $X'$ except those defined by
$h_i+h_k$, and furthermore $f_1|_{X'} \nucong f_2|_{X'}$ since, by
definition,
  we are left with segments in which some $h_i$ dominate for $h_i \ne
  h_1,h_2.$ Hence, $f_1 $ and $f_2$ essentially agree on $X$, and
$X$ is not admissible.
Note that $f_1f_2$ is ghost on $X'$.
  \end{example}

In this way, we exclude intersections of algebraic set in which a
facet is eliminated. We also must cope with examples such as the
intersection of the planar curves defined by $\la_1+\la_2+0$ and
$\la_1+\la_2 +1.$

\begin{example}\label{badshell2}
Let $X_{a_i}$ be the curve defined by the polynomial $f = \la_1 +
\la_2 + a_i,$ for $a \in F.$ If $a_1 <_\nu a_2$ then the \tngres \
of the intersection $X:=X_{a_1}\cap X_ {a_2}$ is the ray $\{ (b,b):
b \ge_\nu a_2, b \in \tT \} \subset \tT^{(2)}.$ The functions $f_1 =
\la_1$ and $f_2 = \la_1 + a_2$ agree for every $b
>_\nu a_2$ in $\tT,$ but $f_1((a_2,\udscr)) = a_2$ whereas $f_2((a_2,\udscr)) = a_2^\nu.$
 Hence, $X$ is not admissible.  \end{example}

 Clearly admissibility can be checked locally, i.e., at each
 neighborhood of each point $\bfa \in S$, so the next observation is the
 key.

\begin{prop}\label{admis}  Any
hypersurface defined by a  tangible polynomial is an \admissible\
algebraic set.

\end{prop}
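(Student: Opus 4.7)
The plan is to verify both parts of admissibility (Definition~\ref{admiss1}) for $X = \tZ_\corn(f)$ with $f = \sum_i h_i$ tangible. The preceding lemma already equips $F[X]$ with the structure of a $\nu$-\semiring0; the upgrade to a \vdomain0 (tangible cancellation) will follow from the essential-agreement condition $\divideontimes$, since if $h g_1 = h g_2$ with $h$ tangible, then pointwise cancellation in the \vsemifield0 $F$ on the dense subset of $X$ where $h$ takes tangible values yields $g_1 = g_2$ densely, whence $\divideontimes$ forces equality on $X$. So the substance of the proof is $\divideontimes$.

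Assume $g_1|_U = g_2|_U$ for some open dense $U \subseteq X$. Remark~\ref{adm0} gives $g_1 \nucong g_2$ on all of $X$, so the only possible obstruction to equality is a point $\bfa \in X$ where the two differ in tangibility; say $g_1(\bfa) \in \tT$ and $g_2(\bfa) \in \tG$. Then $g_1$ has a unique essential monomial $h$ at $\bfa$, strictly dominating the remaining monomials of $g_1$ throughout an ambient neighborhood $V$ of $\bfa$, so that $g_1 \equiv h$ on $V$; and $g_2$ has at least two distinct essential monomials $g_{k_0}, g_{k_1}$ tying at $\bfa$ at the value $h(\bfa)^\nu$.

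The point is that tangibility of $f$ forces $X$ to be a genuine $(n-1)$-dimensional polyhedral complex near $\bfa$, with no facets missing as in Examples~\ref{badshell0} and~\ref{badshell1}. Each of $g_{k_0}, g_{k_1}$ is uniquely dominant on an open region of $F^{(n)}$ accumulating at $\bfa$, and the two regions are separated by the hyperplane $\{g_{k_0} = g_{k_1}\}$; generically, $V \cap X$ has substantial $(n-1)$-dimensional parts in each region. On each such part, intersected with $U$, we have $g_2 = g_1 = h$ with $g_{k_\ell}$ the local dominant, and so the monomial $g_{k_\ell}$ coincides with $h$ on an $(n-1)$-dimensional subset for $\ell = 0, 1$.

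Applying Proposition~\ref{multi12} to pairs of points drawn from these two $(n-1)$-dimensional subsets---which lie in distinct facet hyperplanes and thus sweep out linearly independent directions---extends the equalities $g_{k_0} = h$ and $g_{k_1} = h$ along a full-dimensional set of lines in $F^{(n)}$, forcing $g_{k_0} = g_{k_1} = h$ as monomials and contradicting the distinctness of the essential monomials of $g_2$. The main obstacle is the degenerate case in which all facets of $X$ near $\bfa$ sit inside a single hyperplane and the propagation only extends along that hyperplane; here one must exploit that any monomial agreeing with $h$ on such a hyperplane is pinned to $h$ up to a specific power of the defining ratio of that hyperplane, and then combine essentiality of $g_{k_0}, g_{k_1}$ with the identity $g_2 = h$ on the dense subset to force $g_{k_0} = g_{k_1}$ as monomials, ruling out this scenario too.
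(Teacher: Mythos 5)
Your overall strategy mirrors the paper's: reduce via Remark~\ref{adm0} to a point $\bfa$ where $g_1(\bfa)\in\tT$ while $g_2(\bfa)\in\tG$, observe that $g_1$ then has a single dominant monomial $h$ at $\bfa$ whereas $g_2$ has at least two tied essential monomials, and exploit the facet structure of the tangible hypersurface $X$ near $\bfa$. Where you diverge is the final step, and there the argument has a real gap. You establish $g_{k_0}=h$ on an $(n-1)$-dimensional piece $D_0$ of one facet hyperplane and $g_{k_1}=h$ on another piece $D_1$ in a different facet hyperplane, and then invoke Proposition~\ref{multi12} on ``pairs of points drawn from these two subsets.'' But Proposition~\ref{multi12} propagates equality of a \emph{single fixed pair} of monomials from two points to the line joining them: to extend $g_{k_0}=h$ along a line through $\bfb_0\in D_0$ and $\bfb_1\in D_1$, you would need $g_{k_0}(\bfb_1)=h(\bfb_1)$, which you do not have (on $D_1$ it is $g_{k_1}$, not $g_{k_0}$, that matches $h$). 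Restricting to pairs within $D_0$ alone keeps you inside a single hyperplane, and agreement of two monomials on a hyperplane does not force them to be equal as monomials (e.g., $\la_1$ and $\la_2$ on $\{\la_1=\la_2\}$). So ``forcing $g_{k_0}=g_{k_1}=h$ as monomials'' does not follow, and your closing paragraph, which names a ``degenerate case'' but does not resolve it, leaves exactly this issue open.

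The paper closes the argument differently, and more combinatorially: after normalizing so that $h=\fone$ and one essential monomial of $f$ at $\bfa$ is $\fone$, it produces a \emph{third} facet of $X$ near $\bfa$ (the binomial $f_1+f_2$) on which $\fone$ disagrees with every $h''_k$, contradicting the essential agreement of $g_1$ and $g_2$ on a dense subset of $X$. In other words, rather than trying to conclude that two monomials of $g_2$ coincide, the paper shows directly that tangibility of $f$ supplies enough facets around $\bfa$ that the identity $g_1=g_2$ cannot persist across all of them. If you want to salvage your line of attack, you would need either to locate a second facet on which the \emph{same} monomial $g_{k_\ell}$ matches $h$ (then Proposition~\ref{multi12} does yield $g_{k_\ell}=h$ as monomials, a contradiction since $g_{k_\ell}\neq h$), or to switch to the paper's ``extra facet'' bookkeeping. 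As it stands, the step is not justified.
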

\begin{proof}
We need to show that the coordinate \semiring0 of a hypersurface $X
:= \tZ_\corn(f)$ defined by the polynomial $f = \sum_i f_i$ is
\admissible. Suppose that polynomials $g_1 = \sum_j h'_j$ and  $g_2=
\sum_k h''_k$ essentially agree on  $X$. We want to check that they
agree on any given point $\bfa$ of $X$.  By hypothesis they agree on
some dense subset of some small open set $U \subset X$ whose closure
contains $\bfa.$ We replace $g_1$ and $g_2$ by their essential parts
on $U$. Since $g_1(\bfa) \nucong g_2(\bfa)$ by Remark \ref{adm0}, we
are done unless say $g_1(\bfa) \in \tT$ whereas $ g_2(\bfa)\in \tG$.
Thus $g_1$ has only one dominant monomial $h'_j$ at $\bfa$, whereas
$ g_2(\bfa)$ has at least two essential monomials $h''_1, h''_2,
\dots, h''_t$ at $\bfa$. By hypothesis, there are facets $C_1, C_2,
\dots, C_t$ of $U$, defined by binomials of $f$, for which
$h'_j|_{C_k}= h''_k|_{C_k}$, $k= 1,2, \dots, t.$ It is convenient to
work with Laurent polynomials, cf.~Remark~\ref{Lau},  since then we
can divide out by some given $h'_i$ and assume that $h'_i$ is the
constant monomial ~$\fone$.

Likewise, we may normalize $f$ as a Laurent polynomial to assume
that one of the essential monomials of $f$ at $\bfa$ is $\fone$, and $C_1$
is given by $f_1+\fone$.  Then $C_2$ is given by $f_2+f_3$ where $f_2
\ne \fone,$  and $f_1+f_2$ defines another facet, on which $\fone \ne
h''_k,$ a contradiction.
\end{proof}

In particular, the coordinate \semiring0 of a tropical line is
\admissible. The proposition fails for non-tangible polynomials,
since the neighborhood of a point might not have enough components
to get the contradiction in the previous proof.

\begin{example}\label{badshell02} Let $f = \la^2 + a^\nu\la + ab$ for $b<_\nu 1$, for whose
\algset\ $X = \tZ(f)$ the tangible part is the interval $X|_\tng =
[b,a].$ Then $\la$ and $\la + b$ agree on $X \setminus \{ b\}$ but
not on $b$, since $b +b = b^\nu \ne b.$  Hence, $X$ is not
admissible.
 \end{example}

\begin{example}\label{badshell}
Here is an example of  how a monomial can be quasi-essential at one
point of a hypersurface~ $X$ but essential at another portion of
$X$.

Let $X$ be the hypersurface defined by the polynomial $\la_1 + \la
_2 + 2\la_3 + \la_3 \la_4,$ and let  $f = 2 \la _1^2  + 2 \la _2^2 +
\la_1 \la_2 \la_4.$ When $\la_ 3$ takes a   small value with respect
to the substitutions of $\la_1, \la_2, $ and $\la _4,$ $X$ becomes
the \algset\  of $\la _1 + \la _2,$   for which the monomial $\la_1
\la_2 \la _4$ can be essential in $f$. But when $\la_3$ takes on a
  large value, with
respect to the substitutions of $\la_1, \la_2, $ $X$ becomes the
\algset\   of $2 + \la _4$, i.e., $\la _4 = 2,$ where $\la _1 \la_2
\la _4$ is only quasi-essential in $f$.
  \end{example}

\begin{figure}[h]
\setlength{\unitlength}{0.5cm}
\begin{picture}(10,9)(0,0)
\grid
\thicklines
\put(1.5,5){\line(1,0){2.5}}
\put(6,5){\line(1,0){2.5}}
\put(5,6){\line(0,1){2.5}}
\put(5,1.5){\line(0,1){2.5}}
\pspolygon[fillstyle=none,fillcolor=lgray](2,2.5)(2.5,2)(3,2.5)(2.5,3)
\put(4,0){(a)}
\end{picture}
\begin{picture}(10,7)(0,0)
\grid
\thicklines
\put(1.5,5){\line(1,0){2.5}}
\put(6,5){\line(1,0){2.5}}
\put(5,6){\line(0,1){2.5}}
\put(5,1.5){\line(0,1){2.5}}
\pspolygon[fillstyle=solid,fillcolor=lgray](2,2.5)(2.5,2)(3,2.5)(2.5,3)
\put(4,0){(b)}
\end{picture}
\begin{picture}(10,9)(0,0)
\grid
\thicklines
\put(1.5,5){\line(1,0){7}}
\put(5,1.5){\line(0,1){7}}
\pspolygon[fillstyle=none,fillcolor=lgray](1.75,2.5)(2.5,1.75)(3.25,2.5)(2.5,3.25)
\put(4,0){(c)}
\end{picture}

\caption{}\label{fig:2}
\end{figure}

 \begin{example} We consider some familiar examples from tropical geometry,
 viewed in the supertropical context.
  \begin{enumerate} \eroman \item
  Let $f_k = \la_1 +\la_2^k +0.$ Its corner locus  $X$  is \admissible, by
  Proposition~\ref{admis}. \pSkip

  \item In (i), Obtain $Y$ by erasing the ray given by $\la_1 =\la _2^k$
      from $X $.
  (Note that $Y =  \tZ_{\corn} (K),$ where $K = \{f_1, f_2\}$.) The polynomial
  $\la_1+\la_2$ takes on the value $0$ at each point of $X|_\tng$ except
  $(0,0)$, where it takes on the value $0^\nu.$ Thus,
  $\la_1+\la_2$ essentially agrees with the constant function $0$ on~  $X|_\tng$ but they do
  not agree on all of $X$, so $Y$ is not
  \admissible. \pSkip

\item $f = \la_1 ^2 + 3\la _1 + \la _2^2 + 4\la _1 \la _2 +5.$ Specializing $\la _2$ to some small value sends the \algset\  of~$f$ to an \algset\  in
which $\la_1 = 3$ or $\la _1 = 2;$ i.e., the \algset\  has become
disconnected and reducible. The same effect can be applied to
tropical elliptic curves. \pSkip

  \item $f = \la_1^2  \la_2^2 + \la_1^2 + \la_2^2 + 0 +
  1\la_1\la_2.$ The \tngres \ of its corner locus   is a square with a ray emanating from
  each vertex
  in the appropriate direction.  (See Fig. \ref{fig:2}(a).)
  \pSkip

  \item $f = \la_1^2  \la_2^2 + \la_1^2 + \la_2^2 + 0 + 1^\nu\la_1\la_2.$ The \tngres \ of its locus    is a filled square with a ray emanating from
  each vertex
  in the appropriate direction. (See Fig. \ref{fig:2}(b).)
\pSkip

 \item $f = \la_1^3 \la_2^3 +  1\la_1^2  1\la_2^2 +  \la_1^2  \la_2  +  1\la_1  \la_2^2  + \la_1^3 + \la_2^3  + 1\la_1\la_2+ 0.$  The \tngres \ of its corner locus   is similar to that in~(iv),  but with the four rays continuing
  inside the square, meeting at the origin. (See Fig \ref{fig:2}(c).) Thus, one could start
  with a tropical curve (a \admissible\ coordinate \semiring0), erase a few
  lines, and still have a tropical curve.  \pSkip

\item $f = \la_1^2  \la_2^2 + 2\la_1^2 \la_2 + 2\la_1\la_2^2  +
  3\la_1\la_2 +\la_1 + \la_2  +2 $ and $g = \la_1^2  \la_2^2 + 2\la_1^2 \la_2 + 2\la_1\la_2^2  + \la_2  +2
  $, yielding
  two quartic tropical curves. Although we get $g$ by erasing two
  monomials of $f$, the curve of $f$ is not contained in the curve
  of $g$. \pSkip

  \item $f = \lm_1 + \lm_2 + \lm_3+0$, and then we erase the facets defined by $\lm_1=\lm_2$ and $\lm_3=0$.
  The functions $\lm_1=\lm_2$ and $\lm_3=0$ are the same on all points
  except $(\al,\al,\al)$, $(\bt,\bt,\bt)$, and $(0,\bt,0),$ where one side can
  be ghost and the other tangible.
  \end{enumerate}

  \end{example}

Let us generalize (vi). \begin{prop}\label{admp} Suppose  $X =
\mcZ_{\corn}({f })$ where
  $f = \sum_i h_i \subset F[\lm_1, \lm_2]$ is essential, and
  erase the tangible  facet given by $\mcZ_{\corn}(h_1 + h_2).$ The ensuing
curve is not \admissible. \end{prop}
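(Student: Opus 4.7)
The plan is to adapt the perturbation trick of Example~\ref{badshell02} together with the splitting strategy of Example~\ref{badshell1} to exhibit two polynomials that essentially agree on $X'$ but disagree at a boundary point of the erased facet. Because $F_{12} := \mcZ_\corn(h_1+h_2)$ is a proper 1-dimensional tangible facet of the 2D tropical curve $X$, its closure in $X$ meets at least one other facet at a vertex $\mathbf{v}$; by the combinatorics of a generic vertex of a 2D tropical curve there is then an index $k \ge 3$ with $h_1(\mathbf{v}) = h_2(\mathbf{v}) = h_k(\mathbf{v})$, with $h_k$ the unique maximizer of the $h_j$ ($j \ge 3$) at $\mathbf{v}$. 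Write $c := h_k(\mathbf{v}) \in \tT$.

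I would then take $g_1 := h_1 + h_2$ and $g_2 := \sum_{i \ge 3} h_i$, so that $f = g_1 + g_2$. At $\mathbf{v}$ the binomial $g_1$ evaluates to the ghost $c^\nu$ because $h_1$ and $h_2$ are tied, while $g_2$ evaluates to the tangible $c = h_k(\mathbf{v})$ because $h_k$ is uniquely dominant in $g_2$ there; hence $g_1(\mathbf{v}) \ne g_2(\mathbf{v})$. On the interior of the two facets $F_{1k}$ and $F_{2k}$ of $X'$ adjacent to $\mathbf{v}$, exactly one of $h_1, h_2$ is dominant in $f$ together with $h_k$, so both $g_1$ and $g_2$ take the common tangible value $h_1$ or $h_2$, giving $g_1 = g_2$ there.

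The main obstacle is controlling facets $F_{ij}$ of $X'$ with $\{i,j\} \cap \{1,2\} = \emptyset$, where $g_1$ is tangible and strictly less than the ghost $g_2 = h_i^\nu$, which can block the equality $g_1 = g_2$ from extending to a dense open subset of the whole of $X'$. I would resolve this in one of two ways: (i) appeal to the fact noted in the excerpt that admissibility can be checked locally, so that the local failure of agreement in a neighborhood of $\mathbf{v}$ in $X'$ consisting of $F_{1k} \cup F_{2k}$ already suffices; or (ii) replace the witnesses by the two-term polynomials $g_1 := h_k$ and $g_2 := h_k + c$ directly modeled on Example~\ref{badshell02}, reducing global essential agreement to the statement that $\{\bfa \in X' : h_k(\bfa) >_\nu c\}$ is open dense in $X'$, which follows from the fact that $h_k$ is strictly increasing along each of the facets $F_{1k}, F_{2k}$ of $X'$ emanating from $\mathbf{v}$. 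Either route produces a pair of polynomials essentially agreeing on $X'$ while disagreeing at $\mathbf{v}$, which shows that $F[X']$ fails the defining condition $\divideontimes$ of admissibility.
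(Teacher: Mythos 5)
Your top-level strategy---exhibit $g_1, g_2$ essentially agreeing on the erased curve $X'$ yet disagreeing at a vertex $\mathbf{v}$ of the erased facet---is the right one, and you correctly flag the obstruction to the naive witnesses $g_1 := h_1+h_2$, $g_2 := \sum_{i \ge 3} h_i$: on a facet $F_{ij}$ of $X'$ with $i,j \ge 3$, $g_2$ is ghost while $g_1$ is generically tangible. But neither patch closes the gap. Patch (i): the paper's ``admissibility is local'' remark is used to verify the \emph{conclusion} of $\divideontimes$ pointwise in the proof of Proposition~\ref{admis}, under the \emph{global} hypothesis of essential agreement; it does not say that a local failure of $\divideontimes$ is a global one. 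Since a neighborhood of $\mathbf{v}$ in the one-dimensional $X'$ is not dense in $X'$ (it misses the interiors of the far facets), a pair that agrees only near $\mathbf{v}$ does not violate $\divideontimes$ for $F[X']$. Patch (ii): $g_1 := h_k$, $g_2 := h_k + c$ with $c := h_k(\mathbf{v})$ agree only where $h_k >_\nu c$, and the density claim is false. For the tropical line $f = \lm_1+\lm_2+0$ with the diagonal facet $\mcZ_\corn(\lm_1+\lm_2)$ erased, $\mathbf{v}=(0,0)$, $h_k = 0$ is the constant monomial, $c = 0$, and $\{\bfa \in X' : h_k(\bfa) >_\nu c\}$ is empty, so $g_1 = 0$ and $g_2 = 0^\nu$ disagree everywhere; and even when $h_k$ is non-constant, monotonicity along $F_{1k}, F_{2k}$ is not automatic and would in any case only give agreement near $\mathbf{v}$, leaving the far facets uncontrolled exactly as in the naive attempt.

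The paper circumvents the far-facet problem by using \emph{product} witnesses $g_1 = \bigl(\sum_{i\ne 1}h_i\bigr)\bigl(\sum_{i\ne 2}h_i\bigr)$ and $g_2 = f\cdot\sum_{i\ne 1,2}h_i$: on the interior of a facet $F_{ij}$ with $i,j>2$ every factor of both products reduces to $h_i+h_j$, so both sides are the ghost $(h_i+h_j)^2$; on a facet $F_{j\ell}$ with $j\in\{1,2\}$ and $\ell>2$, one factor of each product is $h_\ell$ and the other is $h_j+h_\ell$, so both sides are $h_\ell(h_j+h_\ell)$. Multiplying your witnesses by a cofactor that forces both sides to become identically ghost on the facets away from the erased one is the ingredient your proposal is missing.
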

\begin{proof}
This was considered in Example~\ref{badshell1}. We claim that the
polynomial $(\sum _{i\ne 1} h_i)(\sum _{i\ne 2} h_i)$ agrees with
$g: = f  \sum _{i\ne 1,2} h_i$
  on a dense subset of $X$. This is seen from
  by considering each segment in turn, defined by $h_i+h_j$. If
$i,j >2$ the assertion is obvious, so we may assume that $i>2 \ge
j.$ Then on the interior of this segment we have $(h_i + h_j) h_i$
on both sides, proving the claim.

On the other hand, at the intersection in which $h_1,h_2$ agree but
not with any other $h_i$ we have $h_1(\bfa),h_2(\bfa)$ tangible, but
not $h_1(\bfa)+h_2(\bfa)$.
\end{proof}
%
%
%

\begin{rem}\label{admcor}
 An  \admissible\ \algset\ $X$ is  $\mfCA$-irreducible  iff the corresponding
 geometric congruence  is
$\mfCA$-irreducible.\end{rem}

As opposed to the classical situation, a reducible \algset \  can be
the union of irreducible \algsets\  in several different ways
(because of non-unique factorization), and thus a congruence can
be the intersection of irreducible congruences in several
different ways.

\begin{figure}[h]
\setlength{\unitlength}{0.5cm}
\begin{picture}(10,9)(0,0)
\grid
\Thicklines
\put(1.5,5){\line(1,0){3.5}}
\put(5,1.5){\line(0,1){3.5}}
\put(5,5){\line(1,1){3.5}}
{\red
\put(5,5){\line(0,1){3.5}}
\put(5,5){\line(1,0){3.5}}
\put(5,5){\line(-1,-1){3.5}} }
\put(4,0){(a)}
\end{picture}
\begin{picture}(10,7)(0,0)
\grid
\Thicklines
\put(1.5,1.5){\line(1,1){7}}
{\red \put(5,1.5){\line(0,1){7}}}
{\blue \put(1.2,5){\line(1,0){7.2}}}
\put(4,0){(b)}
\end{picture}

\caption{}\label{fig:3}
\end{figure}

\begin{example}\label{irredundant} $ $
\begin{enumerate}
    \item  The \algset \  $\mcZ_{\corn}((\la _1 + \la _2 + 0 )(\la _1 \la _2 + \la _1\la _2 + 0\la _2))$
  can be viewed as the union of the tropical line $\mcZ_{\corn}(\la _1 + \la _2 + 0)$
    and  conic $\mcZ_{\corn}( \la _1 \la _2 + \la _1\la _2 + 0\la _2)$ (see Fig. \ref{fig:3}(a)),
    as well as the three curves $\mcZ_{\corn}(\la _1 +0)$, $\mcZ_{\corn}(\la _1  +0)$, and $\mcZ_{\corn}(\la _1 +\la
    _2)$ (see Fig. \ref{fig:3}(b)).\pSkip

    \item  Although in (i), we could say that the two
    decompositions differ at the multiplicity of the point
    $(0,0)$, and thus could be detected in the layered congruence,
    Sheiner \cite[Example~5.7]{Erez1} found the following example in which
    even the multiplicities match:
   $$  \begin{array}{rr}
    (\la_2 + \la_1 + \la_1^2
+ (-1)\la_1^3)(\la_2 + 0 + \la_1^2  + (-2)\la_1^4
)  & = \\ [2mm] (\la_2 + \la_1 + \la_1^2 + (-2)\la_1^4
)(\la_2 + 0 + \la_1^2+ (-1)\la_1^3 ).
   \end{array} $$
\end{enumerate}
\end{example}

 So far we have two basic ways of initiating a homomorphism on a
coordinate \semiring0: Either restrict its \algset, or put in new
relations among the indeterminates of $\La.$ By \textbf{binomial
relation} we mean a relation of the form $h|_W = h'|_W$, where
$h,h'$ are different monomials and $W \subseteq X$ is nonempty.

\begin{lem}\label{multi4} Suppose $X\subset Y$ are \algsets.
Then the induced map $\Phi:F[Y] \to F[X]$ involves an extra
binomial relation on each facet of $Y$   not contained in a facet
of  $X$.
\end{lem}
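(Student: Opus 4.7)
The plan is, for each facet $W$ of $Y$ not contained in any facet of $X$, to exhibit an explicit binomial relation that lies in the kernel congruence of $\Phi:F[Y]\to F[X]$ but does not hold in $F[Y]$. The binomial will be read off from the two monomials whose balancing defined $W$.

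By definition of facet, $W$ is a maximal connected subset of $Y$ contained in $\tZ_\corn(h_i+h_j)$ for two monomials $h_i,h_j$ from a defining polynomial $f$ of $Y$ (the case of $W \subseteq \tZ_\corn(h_i)$ for a ghost monomial is treated analogously). Along $W$ one has $h_i|_W = h_j|_W$ as values, yet by maximality this coincidence does not extend to any neighborhood of $W$ in $Y$: off $W$ one of the two monomials strictly dominates the other. Hence the pair $(h_i,h_j)$ is \emph{not} identified in $F[Y]$.

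Now assume $W$ is not contained in any facet of $X$. Then the defining relations of $X$ near $W$ do not single out $h_i+h_j$ as a balancing pair, which forces one of $h_i,h_j$ to strictly dominate the other throughout some open subset $U\subset X$ meeting $W$; otherwise simultaneous dominance on a piece of $X$ would produce a facet of $X$ containing (part of) $W$, contrary to hypothesis. On such $U$ the identity $h_i|_U = h_j|_U$ holds in $F[X]$, and by Proposition~\ref{multi12} it propagates along lines to give a polynomial relation valid on $X$ that was not valid on $Y$. This is the advertised extra binomial relation in $\ker\Phi$ associated with $W$.

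The main obstacle will be handling cleanly the dichotomy between facets of $Y$ that $X$ truncates entirely (so $W\cap X$ is a proper subset of $W$) and facets of $Y$ that $X$ absorbs into its interior, and in each case pinpointing an open set of $X$ on which the binomial relation actually holds; the \admissibility\ hypothesis of Definition~\ref{admiss1} and Proposition~\ref{multi12} together should suffice to propagate the local identity into a genuine relation in $F[X]$, and to ensure that distinct missing facets $W$ yield distinct, independent binomial identifications.
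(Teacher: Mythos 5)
Your proposed binomial is the wrong object, and the argument that follows contradicts itself. The two monomials $h_i,h_j$ that balance on the facet $W$ of $Y$ satisfy $h_i|_W = h_j|_W$ by the very definition of $W$; this identity already holds in $F[Y]$ (restricted to $W$), so it cannot be an \emph{extra} relation introduced by $\Phi$. The extra relations are precisely the pairs in the congruence $\Cong$ with $F[X]\cong F[Y]/\Cong$, i.e.\ pairs of functions that are distinct on $Y$ but coincide after restriction to $X$; the binomial you exhibit is not of that form. Moreover, the middle of the argument is incoherent: you first deduce that one of $h_i,h_j$ strictly dominates the other throughout some open $U\subset X$ meeting $W$, and then in the next clause assert that ``on such $U$ the identity $h_i|_U=h_j|_U$ holds.'' Strict domination means precisely that the values differ pointwise, so these two statements cannot both be true. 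Proposition~\ref{multi12} cannot rescue this; it propagates agreement along lines, but you have no agreement to propagate. You also never address the possibility that $W\cap X$ could be empty, in which case no such $U$ exists.

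The paper's proof proceeds in the opposite direction: rather than trying to manufacture a new binomial from the data defining $Y$'s facet, it starts from an arbitrary pair $(f,g)\in\Cong$ --- a relation that genuinely is new in $F[X]$ --- and restricts attention to the facet $W$, where $f$ and $g$ are each governed by a single dominant monomial, $f_i$ and $g_j$ respectively. Since $f|_{X\cap W}=g|_{X\cap W}$, the pair $(f_i,g_j)$ is a binomial relation on $W\cap X$; it is nontrivial unless $f_i=g_j$ for every choice of $(f,g)\in\Cong$, and in that degenerate case $\Phi$ acts as the identity on $W$, forcing $W$ to sit inside a facet of $X$, contrary to hypothesis. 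The key move you are missing is to take the dominant monomials of a pair already lying in $\ker\Phi$ rather than the balancing monomials of a defining polynomial of $Y$.
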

\begin{proof}  Write
  $ F[X] \cong  F[Y]/\Cong$. On each facet of $Y$ we have some
pair $(f,g) \in \Cong$ and we take their dominant monomials $(f_i,
g_j)$ on this facet. Then $(f_i,g_j)$ is the extra binomial
relation that we want, and we are done unless always $f_i = g_j$,
which means that $\Phi$ is the identity on our facet of~$Y$, which
then is embedded in a facet of  $X$.
\end{proof}

\begin{lem}\label{multi4} Suppose $X\subset Y$ are \algset s for
which $F[Y]$ is obtained from $F[X]$ by adjoining one polynomial
relation $f=g$. Then  this polynomial relation arises from a
binomial relation  that dominates~$Y$.
\end{lem}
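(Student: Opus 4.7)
The plan is to peel apart $f$ and $g$ into their essential monomial decompositions on $Y$ and then extract a pair of dominant monomials that, on a dense open subset of $Y$, carry the entire content of the polynomial relation $f = g$.

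First, write essential decompositions $f = \sum_i h_i$ and $g = \sum_j h'_j$ on $Y$. Using the (corner) Zariski topology on $Y$ and Proposition~\ref{top0}, I would intersect the principal corner open sets on which $f$ and $g$ each have a unique dominant monomial. Because a finite intersection of principal corner open sets contains a nonempty principal corner open set, there is a dense open subset $U \subseteq Y$ on which $f$ is dominated by a single essential monomial $h := h_{i_0}$ and $g$ is dominated by a single essential monomial $h' := h'_{j_0}$. By Lemma~\ref{goodshell} (or Proposition~\ref{goodshell1}) we have $f|_U = h|_U$ and $g|_U = h'|_U$, so the relation $f = g$ restricts on $U$ to the binomial relation $h = h'$.

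Next, I would promote this dense agreement to agreement on $Y$. Since $h$ and $h'$ are monomials that agree on the dense set $U$, Proposition~\ref{multi12} forces $h = h'$ on the closure of $U$ in~$Y$; if $Y$ decomposes into irreducible components, apply this componentwise so that the binomial relation $h = h'$ dominates $Y$ in the sense required (i.e., $h$ dominates $f$ and $h'$ dominates $g$ on a dense open set of $Y$, exactly as in Example~\ref{exmpBinomial}). This identifies the binomial relation as coming from the dominant monomials of $f,g$ on $Y$, matching the construction in the previous Lemma~\ref{multi4}.

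Finally, one must verify that the congruence on $F[Y]$ generated by the pair $(f,g)$ coincides with the congruence generated by $(h,h')$. The inclusion generated by $(h,h')$ is contained in that generated by $(f,g)$ because on $U$ (and hence on a dense open set), $f = h$ and $g = h'$, so $(h,h') = (f,g)$ modulo relations already imposed by the ambient essential agreement; applying Remark~\ref{adm0} and the admissibility framework of Definition~\ref{admiss1} transports this equality from $U$ to all of $Y$. Conversely, $(f,g) = (h,h')$ follows the same way, via $f \equiv h \pmod{(h,h')}$ on each component of $Y$ and likewise for $g$.

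The main obstacle will be step three: controlling the behavior of $f$ and $g$ at points of $Y$ lying outside the dense open set $U$, where additional monomials may become essential. The worry is that $f$ could acquire a second dominant monomial $h_{i_1}$ on a lower-dimensional stratum of $Y$, producing a second binomial relation not captured by $(h,h')$. Here one uses the hypothesis that only a \emph{single} polynomial relation is being adjoined: any additional binomial relation arising from a different facet would correspond to a distinct essential agreement and thus to an independent polynomial relation in the generated congruence, contradicting the generation assumption. Thus the single-relation hypothesis funnels all boundary behavior through the one binomial $(h,h')$, completing the reduction.
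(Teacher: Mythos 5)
The paper, as written, supplies \emph{no proof at all} for this statement: it is stated immediately after the preceding (identically labelled) Lemma~\ref{multi4}, which is proved facet-by-facet, and then the text passes directly to Lemma~\ref{multi50}. So there is nothing in the paper to compare your argument against; what can be done is to evaluate it on its own terms, and there it has a genuine gap.

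The gap is in your step two. You claim there is a dense open subset $U \subseteq Y$ on which $f$ is dominated by a \emph{single} essential monomial $h := h_{i_0}$, and likewise $g$ by a single $h' := h'_{j_0}$. This is false in general. An algebraic set $Y$ typically decomposes into several facets, and the dominating monomial of $f$ changes from facet to facet --- indeed, that is precisely what a corner locus encodes. What Proposition~\ref{top0} gives you is a dense open set on which, \emph{at each point}, some one monomial dominates; it does not give a single uniform choice of $i_0$ valid across all of $U$. Lemma~\ref{goodshell} and Proposition~\ref{goodshell1} strip $f$ down to its essential monomials on a neighborhood; they do not collapse $f$ to one monomial over a dense subset of $Y$. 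As a concrete obstruction, take $Y$ to be a tropical line: on its three rays the essential monomial of a typical $f$ will be three different monomials.

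For the same reason the ``main obstacle'' you flag at the end is not dissolved by the single-relation hypothesis. A single pair $(f,g)$ adjoined to the congruence genuinely produces one binomial relation \emph{per facet} of $Y$ (as the proof of the preceding Lemma~\ref{multi4} and of Theorem~\ref{multi5} both exhibit), and those binomials are generally distinct. The assertion that a second binomial on a different facet ``would correspond to an independent polynomial relation in the generated congruence'' is exactly the unjustified step: multiple facet-wise binomials are generated by a single $(f,g)$.

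A repair along the lines of what the paper does elsewhere: replace the global dense open $U$ by the union of interiors of facets of $Y$, take the dominant monomials of $f$ and $g$ facet-wise (as in Example~\ref{exmpBinomial}), and use Proposition~\ref{multi12} on each facet separately to propagate the binomial relation from a dense subset of the facet to the whole facet. This produces the binomial relation(s) that ``dominate $Y$,'' but as a facet-indexed family rather than a single global binomial.
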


\begin{lem}\label{multi50} Suppose $F[X]$ is an admissible coordinate \semiring0 which
is defined by a set of polynomials $f_1, \dots, f_m$, and two of
these polynomial functions $f_1$ and $f_2$ coalesce at an interior
point $\bfa$ of some facet  $W$ of $X$. Then $f_1$ and $f_2$
agree on all of the facet $W$.
\end{lem}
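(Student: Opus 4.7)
The plan is to combine the local facet structure at an interior point with Proposition~\ref{multi12} and the admissibility of $F[X]$. Since $\bfa$ lies in the interior of $W$, Lemma~\ref{goodshell}(i) lets us replace each $f_i$ by its shell at $\bfa$ without altering values in a neighborhood. By the definition of facet, on a sufficiently small relatively open neighborhood $U\subset W$ of $\bfa$, each $f_i$ is given (as a function) by a fixed pair of dominant monomials $h_i+h'_i$ defining the facet of $\tZ(f_i)$ that contains $W$ (or by a single ghost monomial), and this pair takes equal ghost values throughout $W$. Consequently $f_i|_U = h_i^\nu|_U$ for a specific monomial $h_i$ attached to~$W$ via the chosen defining set $\{f_1,\ldots,f_m\}$.

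Coalescence of $f_1$ and $f_2$ at $\bfa$ then yields $h_1(\bfa)\nucong h_2(\bfa)$. If $h_1=h_2$ as monomials we are done, since $f_i|_W$ is determined by the single monomial $h_i$ on the whole connected facet. Otherwise I would use admissibility of $F[X]$ to exhibit a second point $\bfb\in U$ at which $h_1$ and $h_2$ also agree: were $h_1\ne h_2$ on a relatively dense subset of $U$, the polynomial functions $f_1$ and $f_2$ would essentially disagree with the natural ``coalesced'' polynomial that records their common value at $\bfa$, and admissibility would propagate this discrepancy beyond the single point $\bfa$, contradicting the coalescence hypothesis. Given such a second point of agreement, Proposition~\ref{multi12} forces $h_1=h_2$ along the whole line through $\bfa$ and $\bfb$, and iterating over sufficiently many directions within the connected polyhedral facet $W$ extends $h_1\nucong h_2$ to all of $W$, so $f_1|_W=f_2|_W$.

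The main obstacle is producing that second point of monomial agreement needed by Proposition~\ref{multi12}: the single equation $h_1(\bfa)\nucong h_2(\bfa)$ does not by itself force two distinct monomials to coincide anywhere else. The crucial input is the admissibility of $F[X]$, which prevents two distinct monomials appearing as the local reductions of $f_1$ and $f_2$ on $W$ from agreeing only at the isolated point~$\bfa$; instead the agreement must spread to a relatively dense subset of $W$ near~$\bfa$, after which the multiplicativity of monomials along lines does the rest.
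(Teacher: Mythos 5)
You follow the same broad strategy as the paper: reduce each $f_i$ on the facet to a single dominant monomial $h_i$, apply Proposition~\ref{multi12} to propagate monomial agreement along lines, and invoke admissibility to supply the needed input. However, the point you yourself flag as the obstacle is a genuine gap, and your attempted patch does not close it. Saying that $f_1$ and $f_2$ ``would essentially disagree with the natural `coalesced' polynomial that records their common value at $\bfa$'' is not an argument: that comparison polynomial is never defined, and the admissibility condition $\divideontimes$ requires two explicit polynomials that agree on an open \emph{dense} subset of $X$ — pointwise agreement of $h_1$ and $h_2$ at the single point $\bfa$ gives you no such dense set to start from. As written, you have not produced the second point of agreement that \propref{multi12} needs.

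The paper does not try to manufacture that second point via admissibility at all. It treats agreement of the leading monomials of $f_1$ and $f_2$ on an open neighborhood of the interior point $\bfa$ as the starting datum (this is what ``coalesce at an interior point'' gives, together with the fact that each $f_i$ has a fixed dominant binomial on the facet $W$), so \propref{multi12} immediately yields agreement along every ray from $\bfa$ inside $W$. Admissibility enters only later and locally: at a boundary point $\bfa'$ of $W$ where some other monomial $h_1$ of $f_1$ first dominates, one would have $f_1(\bfa') \nucong f_2(\bfa')$ with one side tangible and the other ghost unless $f_2$ has a monomial $h_2$ with $h_1(\bfa') \nucong h_2(\bfa')$; that tangible/ghost mismatch between two functions that essentially agree contradicts $\divideontimes$. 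So the paper's use of admissibility is at the facet boundary, to propagate the agreement past each transition, rather than to seed the argument. If you want to complete your version, you should replace the vague ``coalesced polynomial'' step by this localized boundary argument, or else make precise why coalescence at an interior point of a facet already forces the dominant monomials to coincide as functions on a neighborhood.
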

\begin{proof} They agree via their leading monomials on some open subset containing $\bfa$ and
thus on all rays emanating from $\bfa$ in $W$, by Proposition
~\ref{multi12}. Suppose some other monomial $h_1$ of $f_1$ dominates
them elsewhere on $W$. Then $h_1(\bfa') \nucong f_1(\bfa') \nucong
f_2(\bfa') $ at some point $\bfa' \in W$, which is by definition on
the boundary of $W$. This would mean $ f_1(\bfa') = f_2(\bfa')^\nu$,
contradicting admissibility unless $f_2(\bfa') \in \tG,$ i.e., $f_2$
has a monomial $h_2$ such that $h_1(\bfa') \nucong h_2(\bfa'). $
Continue on an open neighborhood of $\bfa'$, and apply this argument
throughout $W$.
\end{proof}

\section{Dimensions of admissible corner varieties}\label{admiss1}

Binomials play a key role in defining corner \algsets, since
 corner \algsets\ are defined ``piecewise'' by binomials.
 Localizing $F[X]$ at the tangible monomials enables us to pass to the
 Laurent coordinate \semiring0 $F[X^\pm],$ which then is viewed
 inside $F[X]_{\rat}.$

\begin{prop}
 \label{multi3} The subset of any \algset\  defined by a given set of
 binomials is convex (and thus connected).
\end{prop}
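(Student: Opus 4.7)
The plan is to reduce everything to the multiplicativity of monomials along lines, i.e., Lemma~\ref{multi1} and its immediate consequence Proposition~\ref{multi12}. Given an \algset\ $X$ and a collection of binomial relations $\{h_j = h'_j : j \in J\}$, let $W \subseteq X$ denote the subset on which all these binomials vanish. I would take two arbitrary points $\bfa, \bfb \in W$, and show that every point of the form $\bfc_t := \bfa^t \bfb^{1-t}$ with $t \in [0,1]$ again lies in $W$. For each $j$, since $h_j(\bfa) = h'_j(\bfa)$ and $h_j(\bfb) = h'_j(\bfb)$, Proposition~\ref{multi12} applies directly and yields $h_j(\bfc_t) = h'_j(\bfc_t)$, so $\bfc_t$ satisfies every defining binomial.

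The only remaining obligation is to verify that $\bfc_t \in X$. This is precisely where I would invoke the paragraph just before the proposition: a corner \algset\ is itself defined \emph{piecewise} by binomials, namely the equalities $h_i = h_j$ of the two dominant monomials of some decomposition $f = \sum_k h_k$ along each facet (or $h_i = 0$ in the ghost case). Any point of $X$ lies in some facet of $X$, and that facet is cut out by such binomial equations. Thus, after enlarging the given set of binomials by those that hold simultaneously at $\bfa$ and $\bfb$ within $X$, the same argument applied to this enlarged family keeps $\bfc_t$ inside a facet of $X$, so $\bfc_t \in X$ and hence $\bfc_t \in W$.

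This establishes that $W$ is closed under taking points on line segments, i.e., convex. Connectedness is then automatic, since every two points are joined by the continuous path $t \mapsto \bfa^t \bfb^{1-t}$ lying entirely inside $W$. The main obstacle — and the only genuinely conceptual point — is the second step above: translating the piecewise-binomial description of corner \algsets\ into enough binomial relations that the line joining $\bfa$ and $\bfb$ remains in $X$, rather than accidentally leaving one facet for a region not belonging to $X$. Once this is handled, the proof reduces to the one-line application of Proposition~\ref{multi12}.
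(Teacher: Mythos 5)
Your opening step matches the paper's: apply Proposition~\ref{multi12} to see that every binomial in the given family still vanishes (in the corner sense) along the whole segment $\bfa^t\bfb^{1-t}$. The problem is exactly where you flag it, namely showing that the segment stays in $X$, and the enlargement trick you propose does not close the gap. A facet of a corner algebraic set $\tZ_\corn(f)$, $f=\sum_k h_k$, is not cut out by binomial equalities alone: it is the locus where a fixed pair $h_i,h_j$ is equal \emph{and dominates every other monomial $h_k$}. Adjoining to your binomial family all pairs that vanish at both $\bfa$ and $\bfb$ captures, via Proposition~\ref{multi12}, only the equalities; nothing in the enlarged family expresses or controls the domination inequalities, so there is no reason the interior points of the segment stay in the facet, or in $X$ at all.

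The paper's own proof handles precisely this in the parenthetical ``since all other monomials are dominated at these points,'' which is an appeal to Proposition~\ref{multi2}, the inequality analogue of Proposition~\ref{multi12}: if a monomial dominates another at two points, then it dominates it along the entire segment joining them. Since $\bfa$ and $\bfb$ lie in a common facet with dominant pair $h_i,h_j$, each other $h_k$ is dominated by $h_i$ at both endpoints, so Proposition~\ref{multi2} propagates this domination along the segment; combined with Proposition~\ref{multi12} for the equality $h_i=h_j$, this shows $f$ remains ghost on the segment, so the segment lies in $X$. Replacing your enlargement step by this invocation of Proposition~\ref{multi2} repairs the argument and brings it into line with the paper's.
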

\begin{proof} By Proposition~\ref{multi12}, for any two corner
roots of the binomial, the line joining them also consists of corner
roots (since all other monomials are dominated at these points).
\end{proof}

\begin{thm}\label{multi5} If $X\subset Y$ are $\mfC(\tR)$-corner varieties, then the induced map $\Phi:F[Y] \to F[X]$, if not~1:1, involves
  extra binomial relations which dominate $Y$.
\end{thm}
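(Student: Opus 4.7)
The plan is to reduce the polynomial relations in $\ker \Phi$ to binomial relations via the two preceding lemmas on binomial dominance, and then promote those binomials from dominating intermediate subsets of $Y$ to dominating all of $Y$ by invoking the $\mfC(\tR)$-irreducibility of $Y$.

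First I would set $\Cong := \ker \Phi$, the congruence on $F[Y]$ with $F[Y]/\Cong \cong F[X]$. Since $\Phi$ is not $1{:}1$, $\Cong$ is nontrivial, and I would filter the surjection $F[Y] \twoheadrightarrow F[X]$ through a chain of coordinate \semirings0
$F[Y] = F[Y_0] \twoheadrightarrow F[Y_1] \twoheadrightarrow \cdots \twoheadrightarrow F[Y_m] = F[X]$
corresponding to a chain $Y \supseteq Y_1 \supseteq \cdots \supseteq Y_m = X$, where at each step one adjoins a single polynomial relation $f_i = g_i$. By the binomial-reduction form of \lemref{multi4}, each such polynomial relation arises from a binomial relation $h_i = h'_i$ that dominates $Y_i$; the task then becomes to upgrade ``dominates $Y_i$'' to ``dominates $Y$''.

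For this upgrade I would use irreducibility. Let $Z_i \subseteq Y$ be the closure of the locus on which $h_i = h'_i$ holds; by \propref{multi3} this is a convex, hence connected, corner algebraic subset of $Y$ containing $X$. Let $Z'_i$ be the closure in $Y$ of the complementary region. Then $Y = Z_i \cup Z'_i$ is a union of corner algebraic subsets, and since $X \subseteq Z_i$ is nonempty while $Y$ is $\mfC(\tR)$-irreducible, one must have $Z_i = Y$, so $h_i = h'_i$ dominates all of $Y$. Iterating over the chain yields the required family of binomial relations that together cut out $\Phi$.

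The main obstacle will be verifying that the intermediate sets $Y_i$ and the decomposition $Y = Z_i \cup Z'_i$ all actually lie in $\mfC(\tR)$, so that the irreducibility of $Y$ can legitimately be applied to them; and using \lemref{multi50} together with the admissibility of $F[Y]$ to propagate the binomial identity from the interior of each facet of $Z_i$ onto its closure, without losing the tangibility information that distinguishes a binomial equality from merely having both monomials ghost-equal at boundary points. Once these technicalities are handled, the theorem follows.
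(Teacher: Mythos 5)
Your proposal diverges fundamentally from the paper's argument, and the central step — upgrading ``dominates $Y_i$'' to ``dominates $Y$'' via irreducibility — does not go through.

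The decomposition $Y = Z_i \cup Z'_i$ you build is vacuous. Once $Z_i \subsetneq Y$, the set $Y \setminus Z_i$ is a nonempty relatively open subset of $Y$, and by Proposition~\ref{top0} every nonempty open set in the corner Zariski topology is \emph{dense}. Hence $Z'_i$ (the closure of that complement in $Y$) is automatically all of $Y$, so $Y = Z_i \cup Z'_i = Z_i \cup Y$ holds trivially, irreducibility gives no information, and you cannot conclude $Z_i = Y$. Beyond the logical gap, even if you could force $Z_i = Y$, that would say the binomial $h_i = h'_i$ already holds on a dense subset of $Y$, hence (by admissibility) on all of $Y$; but then $(h_i,h'_i)$ would already lie in the congruence of $Y$ and would not be an \emph{extra} relation distinguishing $F[X]$ from $F[Y]$. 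In other words, your reading of ``dominate $Y$'' as ``the binomial identity holds globally on $Y$'' is too strong: if all the new binomial relations held on $Y$, one would have $X = Y$.

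The paper's proof is local and does not attempt any such upgrade. It takes an arbitrary new pair $(f,g)$ in the congruence, uses Proposition~\ref{multi3} to extract a binomial relation $h|_W = h'|_W$ on a single facet $W$ of $Y$ (where $h, h'$ are the dominant monomials there — this is the sense in which the binomial ``dominates''), normalizes via the tangible lift and Laurent localization so that $h' = \fone$, and then uses Lemma~\ref{multi50} to turn the binomial into an elimination $\la_n \mapsto \a^{-1/i_n}\la_1^{-i_1/i_n}\cdots\la_{n-1}^{-i_{n-1}/i_n}$ valid on that facet. One then runs this elimination inductively over the finitely many facets; termination is by exhaustion of variables, not by any global irreducibility argument. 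You would do better to drop the $Z_i \cup Z'_i$ device entirely and instead follow this facet-by-facet elimination. A secondary concern: your filtration of $\Phi$ into single-relation steps $F[Y_i]\to F[Y_{i+1}]$ implicitly assumes the congruence is built by adjoining one polynomial relation at a time, which should be justified (e.g., by Noetherianity) rather than asserted.
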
 \begin{proof} For any new element $(f,g)$ of a congruence,
using Proposition~\ref{multi3}, we obtain a new binomial relation $h
|_W = h'|_W$ on some facet $W$. Taking its tangible lift and localizing, we may
assume $h' = \fone$ and write $h = \a \la_1^{i_1} \cdots
\la_n^{i_n},$ for each $i_j \in \mathbb Z,$ not all 0. Reindexing
the indeterminates, we may assume that $i_n \ne 0.$ By Lemma
~\ref{multi50} we get a
 new binomial relation, which enables us to solve $$\la_n  \ds\mapsto
 \a^{-\frac 1 {i_n}}\la_1^{-\frac{i_1}{i_n}} \ds \cdots \la_{n-1}^{-\frac{i_{n-1}}{i_n}} $$
 in terms of the   indeterminates $ \la_1, \dots, \la_{n-1}$ (working in $F[X]_{\rat}$), on this facet. This
 provides an inductive procedure on each of our finitely many facets, which must
 terminate    when we eliminate all of the indeterminates in each facet.
\end{proof}

%
%
%
%
%
%
%

%

 There are several possible definitions of dimension
which can be garnered from the coordinate \semiring0. We take the
algebraic one. This is close to the approach of Perri~\cite{Per}.

\begin{defn}\label{addim} The \textbf{dimension}  $\dim X$ of an irreducible admissible corner \algset
\ (i.e., of a $\mfCA$-corner variety) is the maximal length of a
chain of $\mfCA$-subvarieties  of $X$, i.e., the   maximal length
$m$ of a chain of $\mfCA$-irreducible coordinate \semirings0 $$F[X]
= F[X_0]\to \ds\cdots \to F[X_m]$$ (where $X_m$ is the $\nu$-fiber of a point,
as in Example~\ref{fiber}).
\end{defn}

\begin{thm}\label{primelen2} If there is a chain of homomorphisms $F[\La]
\to F[X_1]\to \dots \to   F[X_m],$ where $\La = \{ \la_1, \dots,
\la_n\}$ and each $X_i$ is a  $\mfCA$-corner variety, then $m \le
n.$ Furthermore, for any such chain of maximal length, $m=n.$
\end{thm}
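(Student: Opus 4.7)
The plan is to apply Theorem~\ref{multi5} inductively along the chain, using the binomial relations it produces to successively eliminate indeterminates in the rational coordinate \semirings0, and then to show that when no indeterminate is left to eliminate, the chain cannot be properly refined.

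First I would establish the bound $m \le n$. Setting $X_0 := F^{(n)}$, each link $F[X_{i-1}] \to F[X_i]$ is a proper surjection of \admissible\ coordinate \semirings0. By Theorem~\ref{multi5} it introduces a new binomial relation on some facet of $X_{i-1}$ which, after localization, expresses one of $\la_1, \dots, \la_n$ in terms of the others in $F[X_i]_{\rat}$. I would select at each stage an indeterminate $\la_{j(i)}$ not previously eliminated; such a choice exists because otherwise the new binomial relation would already be forced by those accumulated in $F[X_{i-1}]_{\rat}$, contradicting properness of the map $F[X_{i-1}] \to F[X_i]$. After $m$ steps, the indices $j(1), \dots, j(m)$ are distinct elements of $\{1, \dots, n\}$, forcing $m \le n$.

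For the equality claim I would argue by contrapositive: if $m < n$ then the chain is not maximal. Fewer than $n$ eliminations means that either (a) some step $F[X_{i-1}] \to F[X_i]$ contributes two or more independent binomial relations, in which case I would insert an intermediate variety $X'$ with $X_{i-1} \supsetneq X' \supsetneq X_i$ cut out by only one of those relations; or (b) some indeterminate $\la_{j_0}$ remains free in $F[X_m]_{\rat}$, in which case I would extend the chain one further step by imposing a binomial relation involving $\la_{j_0}$. Either way the chain is properly refined, contradicting maximality.

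The hard part will be checking that the inserted or appended set $X'$ is genuinely an \admissible\ $\mfCA$-corner variety rather than merely an algebraic set. For admissibility I would argue in the spirit of Proposition~\ref{admis} and Lemma~\ref{multi50}: a single binomial relation cuts out a convex locus by Proposition~\ref{multi3}, and any two polynomials essentially agreeing on $X'$ would also essentially agree on a dense subset of $X_{i-1}$, hence coincide there by admissibility of $X_{i-1}$, and thus coincide on $X'$. For irreducibility I would, if necessary, pass to a $\mfCA$-irreducible component of $X'$ lying strictly between $X_{i-1}$ and $X_i$, whose existence follows from the Noetherian decomposition for admissible congruences discussed earlier.
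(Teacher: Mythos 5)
Your overall strategy — invoke Theorem~\ref{multi5} at each link to extract a binomial relation, then count eliminated indeterminates — is the same as the paper's, and the $m\le n$ bound is reached by essentially the same mechanism. But there is a genuine gap in the step where you assert that each link eliminates a \emph{fresh} indeterminate "because otherwise the new binomial relation would already be forced by those accumulated in $F[X_{i-1}]_{\rat}$, contradicting properness of the map."

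The difficulty, which the paper spends the bulk of the proof on, is precisely that a new binomial pair $(\fone, h)$ may be proper as a congruence generator and yet \emph{dominated} on a given facet: $h$ may satisfy $h \le_\nu \fone$ there, so the relation degenerates and eliminates nothing on that facet even though the map $F[X_{i-1}]\to F[X_i]$ is a proper surjection (Example~\ref{phony} is exactly this phenomenon). Your global "properness" argument does not rule this out, since the properness can be carried by behaviour on other facets while leaving a given facet untouched. The paper handles this with the facet-local idea attributed to Perri: when $h = \a\la_1^{i_1}\cdots\la_n^{i_n}$ is dominated, the domination $\a\la_1^{i_1}\cdots\la_n^{i_n}\le_\nu\fone$ is itself an order relation in the indeterminates, and this order relation forces $\la_n$ to appear in one of the essential monomials defining the facet, so one can still substitute for $\la_n$ there. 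Without this step (or some equivalent local bookkeeping), the claim that the indices $j(1),\dots,j(m)$ can be chosen distinct is unsupported.

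Your treatment of the maximality clause also diverges from the paper's, which simply notes that when $m<n$ a free variable survives in each facet, views the facet locally as a hypersurface, and applies Proposition~\ref{admis} to extend the chain. Your case (a) — splitting a single link into two — is not something the paper needs, and would require you to prove that the intermediate locus $X'$ is both admissible and $\mfCA$-irreducible, which you flag as hard but do not actually carry out; the Noetherian decomposition you invoke gives you an irreducible component, but you still need it to sit \emph{strictly} between $X_{i-1}$ and $X_i$, which does not follow automatically. The paper sidesteps all of this by working with case (b) only.
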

\begin{proof} We review the proof Theorem~\ref{multi5}, with some extra care.
These homomorphisms $F[X_{i}] \to F[X_{i+1}]$
 are obtained at each facet by new
binomial relations,  say $\fone +h$, where $h$ is a Laurent monomial.
Without loss of generality, assume that $\la_n$ appears in $h$. At
any facet for which $h$ is essential, $h$ which can be used to
eliminate the indeterminate $\la_n$ in terms of the others. We claim
that this can be done at most $n$ times at any given facet, at which
stage any polynomial is locally constant. But the constants are the
same since
  polynomials are continuous (and $X_{i}$ is connected in view of
  Proposition~\ref{admp}), so by assumption the polynomial is
constant after at most $n$ steps, which means that we cannot
continue the chain further.

The only difficulty with this argument is that some of the
reductions might be trivial, along the lines of Example~\ref{phony}.
In other words, $h$ might be dominated by $\fone$. But now we appeal to an
idea of Tal Perri in his dissertation~\cite{Per}. In order to make
$h = \a \la_1^{i_1} \cdots \la_n^{i_n}$ inessential, we must have $\a \la_1^{i_1} \cdots \la_n^{i_n} \le
_\nu \fone.$ This yields a new inequality among the indeterminates,
involving $\la_n$, which Perri calls an \textbf{order relation}.
This can only happen if $\la _n$ appears in one of the essential
monomials defining the facet, so again we can substitute for $\la
_n$ and eliminate it.

 At each step
we eliminate one more indeterminate, and so the process must terminate after $n$ steps.%
%
This proves that $m \le n.$ When $m<n,$ there remain ``free''
variables in each facet; since the facets can be viewed
locally as hypersurfaces, we conclude with
 Proposition~\ref{admis}.\end{proof}

In conclusion:

\begin{cor}
 Any chain of irreducible admissible corner \algssets \  of  $F^{(n)}$ can be refined
 to a  chain of irreducible admissible corner \algssets \ of  $F^{(n)}$ of length
 $n$.
 \end{cor}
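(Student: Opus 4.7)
The plan is to leverage Theorem~\ref{primelen2} along with its constructive proof to show that any chain of length $m \le n$ can be padded out to length exactly $n$ by inserting intermediate $\mfCA$-corner varieties. Given a chain
\[
F[X_0] \to F[X_1] \to \cdots \to F[X_m]
\]
of $\mfCA$-irreducible coordinate \semirings0, we argue by reverse induction on $n-m$: if $m < n$, locate a link $F[X_i] \to F[X_{i+1}]$ and insert a new $\mfCA$-corner variety $X'$ with $X_i \supsetneq X' \supsetneq X_{i+1}$.

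First I would invoke the proof of \thmref{primelen2}: the homomorphism $F[X_i]\to F[X_{i+1}]$ is realized, facet by facet, as a sequence of binomial (or order) substitutions $\la_j \mapsto \a^{-1/i_j}\prod_{k\ne j}\la_k^{-i_k/i_j}$, each eliminating exactly one indeterminate. Since Theorem~\ref{primelen2} gives the total number of eliminations over the whole chain as at most $n$, if $m < n$ there must be some link at which strictly more than one elimination occurs, or some variable remains free at the end. In the former case, instead of imposing all the relations simultaneously, I would impose just the first binomial/order relation $\fone + h$ alone to define an intermediate coordinate \semiring0\ $F[X']$, with $X_i \supsetneq X'\supsetneq X_{i+1}$.

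The second step is verifying that the set $X'$ so constructed is itself a $\mfCA$-corner variety. Admissibility is local (as noted before \propref{admis}), and a single binomial relation cuts a hypersurface-like facet out of $X_i$, which is \admissible\ by \propref{admis} applied to the tangible polynomial defining the relation; since the coordinate \semiring0 inherits the $\nu$-\domain0 structure from $F[X_i]$ via the quotient, we stay inside $\mfCA$. Irreducibility is obtained by passing, if necessary, to the component of $X'$ still containing $X_{i+1}$; this component is an $\mfCA$-corner variety by \remref{admcor}. If instead the defect $n-m$ comes from free variables at the bottom of the chain (so that $X_m$ is not yet a $\nu$-fiber of a point), a further binomial relation of the form $\la_j + a_j$ provided by \examp*~\ref{fiber} produces a strictly smaller $\mfCA$-corner variety, and we may insert it after $X_m$. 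Iterating these two kinds of insertions, each strictly increasing the length of the chain by one while staying within $\mfCA$, terminates after exactly $n-m$ steps by the upper bound in \thmref{primelen2}, yielding a refined chain of length $n$.

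The main obstacle I anticipate is the irreducibility of the inserted $X'$: imposing a single binomial relation on $X_i$ may produce a reducible algebraic set whose components each meet $X_{i+1}$ in different ways, so care is required to select a component that \emph{contains} $X_{i+1}$ (which exists because $X_{i+1}\subseteq X'$) and to check that this component still corresponds to a congruence in $\mfCA$. Once this is granted, \remref{admcor} and the elimination/counting argument from \thmref{primelen2} close the induction.
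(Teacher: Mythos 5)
Your proof is essentially the argument the paper leaves implicit (the corollary appears without a separate proof, intended to follow directly from Theorem~\ref{primelen2}), and you unpack it correctly: breaking each link into single binomial/order eliminations so as to insert intermediate $\mfCA$-corner varieties, and handling any remaining free variables by extending at the bottom to a $\nu$-fiber as in Example~\ref{fiber}. The one subtlety, which you do flag, is the $\mfCA$-irreducibility of the inserted $X'$: your proposed fix of passing to a component of $X'$ containing $X_{i+1}$ tacitly requires both that $\mfCA$ admits finite irreducible decompositions (the paper proves Noetherianness only for the layered family $\mfCL$, in Proposition~\ref{Noeth}, not for $\mfCA$) and that the irreducible $X_{i+1}$ lies wholly inside one such component --- which in turn relies on intersections like $X_{i+1}\cap X'_j$ remaining in $\mfCA$, a property the paper itself warns can fail for intersections of admissible sets. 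That gap, though genuine, is inherent to the paper's own abbreviated treatment, so your expansion is a faithful reconstruction of the intended argument.
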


\section{The layered approach to varieties}\label{sec:layer}$ $

Although \algset s (Definition~\ref{admiss11})
rely heavily on the use of the $\nu$-structure, applications
concerning multiple roots rely on more refined layerings, so we
briefly present the foundations for this alternative.

\subsection{Layered \domains0 and \semifields0}\label{lay1}$ $

We
recall the main example in ~\cite{IKR4}.

\begin{construction}\label{contrULD} Suppose $\tT$ is a cancellative monoid,
and ~$L$ is a  \semiring0 to be used as an index set. We define the
\textbf{ $L$-layered \domain0 $\mathscr R(L,\tT)$} (or
\textbf{layered \domain0} for short, when $L$ is understood) to be
set-theoretically $L\times \tT$, where for $k,\ell\in L,$ and
$a,b\in\tT,$  we write $\xl{a }{k} $ for $(k,a)$ and  define
multiplication componentwise, i.e.,
\begin{equation}\label{13}
\xl{a }{k} \cdot \xl{b}{\ell} = \xl{ab}{k\ell},  \end{equation} and
addition from the rules:

\begin{equation}\label{14}
\xl{a }{k}+ \xl{b}{\ell}=\begin{cases} \xl{a }{k}& \quad\text{if}\ a >  b,\\
\xl{b}{\ell}& \quad\text{if}\ a <  b,\\
\xl{a }{k+\ell}& \quad\text{if}\ a= b.\end{cases}\end{equation}
 $ \mathscr R(L,\tT)$ is   equipped  with
the \textbf{sort map}
    given by $\lv(\! \xl{a }{k})=k,$
%
%
and maps $$ \nu_{\ell,k}: (k,\tT)  \to (\ell,\tT), \qquad k \le
\ell,  \quad k,\ell \in L,$$  given by $\xl{b}{k} \mapsto
\xl{b}{\ell}$.   We define the \textbf{$\ell$-layer}  $R_\ell := \{ \xl{a}{\ell} \ds : a \in \tT \}$, and  write $R := \mathscr R(L,\tT)$ as the disjoint union $$ \mathscr R(L,\tT) = \bigcup_{\ell \in L} R_\ell.$$
We also define $e_\ell :=  \xl{\one_\tT}{\ell }.$

We write $a \nucong b$ (resp.  $a \nug b$) for $a \in R_k$ and $b \in R_\ell$,  whenever
 $\nu_{m,k}(a) = \nu_{m,\ell}( b )$ (resp. $\nu_{m,k}(a) > \nu_{m,\ell}( b )$) in $R_m$ for some $m \ge
 k,\ell$. (This notation is used generically: we write  $a \nucong b$
even when the sort transition maps  $\nu_{m,\ell}$ are notated
differently.)

\end{construction}

This construction is put into a more formal context in
\cite{IKR4,IzhakianKnebuschRowen2011CategoriesII}. In order not to
be distracted here from the impact of the algebra on geometric
considerations, we take the
 sorting \semiring0 $L$ to be a totally ordered (commutative) \semiring0, perhaps with an absorbing element $0 = 0_L$ adjoined.

 $R_1$
  is called
the set of \textbf{tangible elements} of $R$, and plays a key role
in the theory. It is convenient for~$1= 1_L$ to be the minimal sorting
index in $L$. Towards this end, for any layered \semiring0 $R$,  we
may replace $R$ by $ \bigcup _{\ell \ge 1} R_\ell,$ a sub-\semiring0
of $R$.  The tangible lift is given by   $\xl{a}{\ell} \mapsto
\xl{a}{1}.$
  We bear in mind the examples $L = \Net$,
 and  $L = \Q_{\ge 1},$  each with the usual order.
 Let $\bar L = \{1,\infty\}.$ We then have a \semiring0\
 homomorphism $L \mapsto \bar L$ sending $k \mapsto \bar k$, where $\bar 1 = 1$ and $\bar k = \infty$ for
 each $1 < k \in L.$ 

%


 \begin{defn} $\mathscr R(L,\tT)$ is a  \textbf{layered 1-\semifield0} if  $\tT$ is a group.
\end{defn}

\begin{rem}  $R_1$ is a cancellative
submonoid isomorphic to $\tT.$ Localizing $R:=\mathscr R(L,\tT)$ at
$R_1$ yields   a  layered 1-\semifield0, whose 1-layer is an ordered
group iff
 $\tT$ is an ordered monoid.
\end{rem}

\subsection{Function and polynomial  \semirings0 of layered \semirings0} $ $

 As in \cite{IKR4}, we can pass the layered structure from $R$ to $\FunSR$,
at the expense of enlarging the layering set from $L$ to $\Fun
(\tSS, L)$.

 \begin{example}\label{Layer1}  As noted in
\cite[Remark~5.4]{IzhakianKnebuschRowen2009Refined}, when $R$ is an
$L$-layered \semiring0, then $\FunSR$ is layered
  with respect to
  $\Fun (\tSS,L) $, where $\FunSR$ has the sort map $s$ given by
  $$s(f)(\bfa) = s(f(\bfa)),$$  for $\bfa := (a_1,\dots,
a_n)$ in $S.$

%

We can extend $\ge_\nu$ to a partial order on $\FunSR$ as follows:

 \begin{enumerate}\eroman

    \item $f \nucong g$ iff $f(\bfa) \nucong g(\bfa), \quad \forall \bfa
 \in S;$ \pSkip

 \item  $f >_\nu g$ iff $f(\bfa)>_\nu   g(\bfa), \quad \forall \bfa
 \in S.$
 \end{enumerate}
 Although not totally ordered, $ \FunSR$  satisfies the
weaker properties:
\medskip

 If $f > _\nu  g$,  then  $f+g = f$; $\quad 2f \nucong f$ with $s(2f) = 2s(f),$

\medskip
\noindent seen by pointwise verification.

 \end{example}

The construction and definition were generalized in
\cite{IzhakianKnebuschRowen2009Refined}, \cite{IKR4} and
\cite{IzhakianKnebuschRowen2011CategoriesII}, but we work with the
more specific case here in order to avoid further complications.

Our main interest is  in the case where $\tR: = F[\La]$ in commuting
indeterminates $\Lm = \{ \la_1, \dots, \la _n\}$ over  a layered
1-\semifield0 $F$.  We want to understand the homomorphic images of
$\tR$ by specializing certain $\la _i$ in terms of extensions of
$F$, in order to prepare the groundwork for a layered version of
affine geometry. The main idea is that in specializing $\la_1,
\dots, \la_n$ to elements of $F$, we also obtain a homomorphism
$L[\la_1, \dots, \la_n] \to L$ and thus recover the original sorting
set $L$.

%

\begin{lem} If $R = \mathscr R(L, \tT)$, then $$\Pol (\Lambda,R) = \mathscr
R(\Pol (\Lambda,L),\Pol (\Lambda,\tT)) ,$$ and $$\Laur (\Lambda,R) =
\mathscr R(\Laur (\Lambda,L),\Laur (\Lambda,\tT)) .$$
\end{lem}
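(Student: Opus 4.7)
\medskip
\noindent\textbf{Proof proposal.} The plan is to build a natural identification between $\Pol(\Lambda,R)$ and $\mathscr R(\Pol(\Lambda,L),\Pol(\Lambda,\tT))$ by decomposing each polynomial into its tangible and sort parts. Given $f \in \Pol(\Lambda,R)$ written as a sum of monomials
$$f \;=\; \sum_i \xl{\alpha_i}{k_i}\lambda^{\mathbf e_i}, \qquad \alpha_i \in \tT,\ k_i \in L,$$
I would send $f$ to the pair $(f_\tT,f_L)$ with
$$f_\tT := \sum_i \alpha_i \lambda^{\mathbf e_i} \in \Pol(\Lambda,\tT), \qquad f_L := \sum_i k_i \lambda^{\mathbf e_i} \in \Pol(\Lambda,L),$$
and interpret this pair as the element $\xl{f_\tT}{f_L}$ of $\mathscr R(\Pol(\Lambda,L),\Pol(\Lambda,\tT))$. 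The inverse map is the obvious one: any pair $(p,q)$ with $q$ supported on (a subset of) the pure parts of~$p$ gives rise to the polynomial with coefficients $\xl{\alpha_i}{k_i}$. The set-theoretic bijection is thus clear at the level of formal monomials; the real work is to check that the operations match, and that the identification descends to functions.

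The multiplicative compatibility is the easy step: by distributivity, the product of two layered monomials is a layered monomial, and \eqref{13} immediately gives
$(fg)_\tT = f_\tT\, g_\tT$ and $(fg)_L = f_L\, g_L$, pointwise in the pure part. The additive compatibility is where the main effort lies. For monomials $h_1 = \xl{\alpha}{k}\lambda^{\mathbf e}$ and $h_2 = \xl{\beta}{\ell}\lambda^{\mathbf e}$ with the same pure part, rule \eqref{14} collapses them to $\xl{\alpha}{k}\lambda^{\mathbf e}$, $\xl{\beta}{\ell}\lambda^{\mathbf e}$, or $\xl{\alpha}{k+\ell}\lambda^{\mathbf e}$ depending on whether $\alpha>\beta$, $\alpha<\beta$, or $\alpha=\beta$; this is exactly the layered sum in $\mathscr R(\Pol(\Lambda,L),\Pol(\Lambda,\tT))$ applied componentwise, using that $\Pol(\Lambda,\tT)$ inherits a (partial) order from $\tT$ via evaluation as in Example~\ref{Layer1}. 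So on formal polynomials the map is a layered-$\nu$-semiring isomorphism.

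The subtle obstacle is passing from formal polynomials to polynomial \emph{functions}: I have to verify that two formal polynomials $f,f' \in R[\Lambda]$ induce the same function $R^{(n)}\to R$ if and only if $f_\tT,f'_\tT$ induce the same function $\tT^{(n)}\to\tT$ \emph{and} $f_L,f'_L$ induce the same function $L^{(n)}\to L$. Here I would exploit that every $\bfa \in R^{(n)}$ splits as a tangible tuple together with a sort tuple, i.e.~$a_j = \xl{b_j}{\ell_j}$ with $b_j \in \tT$ and $\ell_j \in L$, and that evaluation respects this splitting: each monomial $\xl{\alpha_i}{k_i}\lambda^{\mathbf e_i}$ evaluates at $\bfa$ to $\xl{\alpha_i\prod_j b_j^{e_{ij}}}{k_i\prod_j \ell_j^{e_{ij}}}$. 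Summing over $i$ by \eqref{14} shows that $f(\bfa)$ is computed by picking out the monomials maximizing the tangible part and adding their sorts in~$L$; this is precisely how $f_\tT$ evaluates at $(b_1,\dots,b_n)$ and $f_L$ evaluates at $(\ell_1,\dots,\ell_n)$. Thus the function induced by $f$ on $R^{(n)}$ is determined by, and determines, the pair of functions induced by $f_\tT$ and $f_L$, which gives the required equality of images.

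Finally, the Laurent case is handled by exactly the same argument, since rules \eqref{13}--\eqref{14} do not see whether exponents are restricted to $\Net$ or allowed in $\Z$; alternatively, one can localize at the tangible monomials and invoke Remark~\ref{Lau}. I expect the verification of multiplicativity and formal additivity to be purely bookkeeping, and the genuine obstacle to be the functional identification in the previous paragraph, which rests on evaluation commuting with the tangible/sort decomposition of tuples in $R^{(n)}$.
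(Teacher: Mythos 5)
The paper's proof is essentially a one-liner: it identifies the unit element of $\Pol(\Lambda,R)$ with the constant function $\rone$ and asserts that the corresponding layer in $\Pol(\Lambda,L)$ is a monoid, without setting up an explicit bijection or verifying the additive structure. Your proof, by contrast, builds the identification explicitly via the decomposition $f \mapsto (f_L, f_\tT)$ and tries to verify the operations coefficientwise; this is a genuinely different and more informative route than the paper takes.

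That said, two points in your argument are genuine gaps rather than bookkeeping. First, your additivity check only covers monomials with the same pure part. For monomials with distinct pure parts, the tangible parts $f_\tT$ and $g_\tT$ are typically incomparable in $\Pol(\Lambda,\tT)$ (e.g.\ $\alpha\lambda$ versus $\beta$), so the three cases of rule \eqref{14} simply do not apply: Construction~\ref{contrULD} as written presupposes a total order on $\tT$, which fails for $\Pol(\Lambda,\tT)$. You hint at resolving this by working pointwise in the spirit of Example~\ref{Layer1}, but that quietly changes what $\mathscr R(\Pol(\Lambda,L),\Pol(\Lambda,\tT))$ means and should be stated, since otherwise the right-hand side of the lemma is not even well-defined. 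Second, your map is not onto: you restrict the inverse to pairs $(p,q)$ with compatible supports, yet $\mathscr R(\Pol(\Lambda,L),\Pol(\Lambda,\tT))$ is set-theoretically the full product $\Pol(\Lambda,L)\times\Pol(\Lambda,\tT)$, so the claimed equality is really an identification with a proper subset (or requires a quotient). Neither issue is resolved by the paper's terse proof either, but they do block your argument as written and deserve explicit treatment.
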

\begin{proof}  The unit element of the monoid  $\Pol (\Lambda,R)$
is the constant function sending all elements to $\rone$, and its
corresponding layer  in $\Pol (\Lambda,L)$ is clearly a monoid. The
same argument holds for Laurent polynomials and rational
polynomials.
\end{proof}

%
%
%
%
%
%
%
%
%
%
%
%

 In this way, we can replace $R$ by
 $\FunSR$ in the theory described above, but at the cost of
 replacing the original sorting set $L$ by a much more complicated sorting
 set.

 The reason we used the supertropical
and not the layered structure in our definition of $ \mfC (\tR)$-variety is because of the
following sort of example.
 To ease notation, we write $a$ for $\xl{a}{1},$ and $\la$ for
 $0\la.$

\begin{example}\label{badshell00}
 Consider the corner \algset \ of the polynomial $f = \lm^2 +0$ in $F$. The function
 $g = \lm^2+\lm+0$ agrees with $f$ at all points except $a=0.$ Over
 the supertropical structure, $f(0) = 0^\nu = g(0),$  and the corner \algset \ is easily seen to
 be admissible. But the analogous property fails with respect to the
 layered structure, since $f(0) = \xl{0 }{2} $ whereas
 $g(0) = \xl{0 }{3}.$
  \end{example}

  Such an example could not interfere with the supertropical theory,
  because of Lemma~\ref{goodshell}(ii).
  Nonetheless, the layered approach enables one to cope better with
  different multiplicities of roots, and gives us the following alternative approach.

\subsection{Layered algebraic sets} $ $

Let $F = \mathscr R(L, \tT)$ be a layered 1-\semifield0.
Recall \cite[Definition~5.7]{IzhakianKnebuschRowen2009Refined}:

\begin{defn}\label{laymap} The \textbf{layering map} of a function $f\in \Fun (\tSS,F)$ is
the map $\vmap_f: \tSS\to L$ given by

$$\vmap_f(\bfa) :=
s(f(\bfa)), \qquad \forall  \bfa \in \tSS.$$
 The \textbf{layering map} of a set of functions $A \subseteq \Fun (\tSS,F)$ is
 given by
 $$\vmap_A(\bfa) := \inf _{ f \in A} \vmap_f(\bfa).$$
\end{defn}

Thus, $\vmap_A \in \Fun (\tSS,L) $. 

\begin{defn}\label{laymap1}
The \textbf{layering} $\tL(A)$ of a subset $A \subseteq \Fun
(\tSS,F)$ is the set $$ \tL(A) := \{ (\bfa, \vmap_A(\bfa)): \bfa \in \tSS \}.$$
The \textbf{\lalgset}\ $X: = X_A$ is the subset $$ X_A := \{ (\bfa,
\vmap_A(\bfa)) \in \tL(A) \ds : \vmap_A(\bfa) >1\}.$$ We write
$\underline{X}$ for the projection of $X$ onto $\tSS$, which is
 $\{\bfa \in \tSS: \vmap_A(\bfa) >1\}.$
\end{defn}

This matches our definition of \algset\, but also records the jump
in multiplicity. Thus $A$, although not always notated, is intrinsic
in the definition of $X$,
 and the second coordinate $\vmap_A(\bfa)$  plays a key role. 

\begin{defn}\label{laymap2} As in Definition~\ref{laymap}, given two \lalgset s $X= X_A$ and $Y =Y_B$, we
define $$X \vee Y = \{(\bfa, \max \{ (\vmap_A(\bfa), \vmap_B
(\bfa))\}) \ds: \bfa \in \tSS, \ \vmap_A(\bfa) >1 \text{ or }
\vmap_B (\bfa) >1 \};$$   $$ X \wedge Y = \{(\bfa, \min \{
(\vmap_A(\bfa), \vmap_B(\bfa))\})\ds : \bfa \in \tSS, \
 \vmap_A(\bfa)>1\text{ and } \vmap_B(\bfa) >1\}.$$

We say that $X \preceq Y$ if $X \wedge Y = X,$ i.e., if
$\underline{X} \subseteq \underline{Y}$ and  $\vmap_A(\bfa) \le
\vmap_B(\bfa)$ for each $\bfa \in \tSS .$
\end{defn}

\begin{rem}\label{und1} $\underline{X \vee Y }= \underline{X}\cup \underline{Y};  $  $\underline{X \wedge Y }
=\underline{X}\cap \underline{Y}.$
\end{rem}

Repeating Definition~\ref{geomcong} where now $F$ is layered, we now
call~$\Cong_X$ a  \textbf{layered congruence} of $X$, and define $
\mfCL $ to be the set of layered congruences on $\tR : = F[\Lm]$.

\begin{defn}
 A \lalgset \ $X $ is $\mfCL$-\textbf{irreducible} if it cannot be
written as $X _1 \vee X _2$ for \lalgset s $X _1 ,  X _2, \ne X,$
and
 $\mfCL$ is \textbf{Noetherian} if every descending chain of   \lalgset s
 (under~$\preceq$) stabilizes.
\end{defn}

\begin{rem}\label{admcor1}
As in Remark~\ref{admcor},
 A   \lalgset\ $X$ is  $\mfCL$-irreducible  iff the corresponding
 geometric congruence  is
$\mfCL$-irreducible.\end{rem}

\begin{example}\label{int2}  $ $
\begin{enumerate} \eroman
  \item
 Let us view Example~\ref{int1} from this
perspective.
Let  $L_\a:= X_{f_\a}$ be the tropical line  defined by the polynomials $f _\a= \la
_1 + \la _2 + \a$ and  $X:= X_{g}$ be the tropical curve  defined by the
polynomial  $g = \la _1 \la _2 + \la _2 + 0,$ and let $X_\a = L_\a \wedge X.$
  Then $(\underline{L_\a \cap X})|_{F_1 ^{(2)}}$ is just the segment $[0,\a]$ on the $\la_2$-axis, but
$\vmap_{f_\a}(0) = 2 = \vmap_{g}(\a)$ whereas $\vmap_{f_\a}(\a) = 3
= \vmap_g(0).$ In other words, when $\a < \beta$ we do not have
$X_\a \preceq X_{\beta}$ even though $\underline{X_\a} \subset
\underline{X_{\beta}}$. \pSkip

\item
 Likewise, let $L_1$ be the tropical line  defined by the polynomial $\la
_1 + \la _2 + 0$ and $X_2$ be the tropical curve  defined by the
polynomial $\la _1^2 + \la _2 + 0$. Now $(\underline{L_1 \wedge X_2})|_{F_1 ^{(2)}}$
still is the union of two rays (the lower $\la_1$ and $\la_2$ axes),
which is properly contained in $L_1$. We can get the third ray by
intersecting the hyperplanes of $\la_1 + \la _2$ and $\la_1 +0$, but
the level at $(0,0)$ is only 2, not 3. Thus, $L_1$ is irreducible
with respect to $\preceq $.
\end{enumerate}
\end{example}

Let us formalize Example~\ref{int2}(i).

\begin{rem}\label{int3}
 By definition, the layering function is constant on any
facet. Hence, if $X \preceq Y,$ every facet of $X$ is contained in
the corresponding facet of $Y$.
\end{rem}

\begin{prop}\label{Noeth}  The class of layered algebraic subsets of $F^{(n)}$
is Noetherian.
\end{prop}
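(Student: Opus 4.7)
The strategy is to decompose a descending chain $X_1 \succeq X_2 \succeq \cdots$ of \lalgset s into its two natural pieces, namely the descending chain of underlying subsets $\underline{X_1} \supseteq \underline{X_2} \supseteq \cdots$ in $F^{(n)}$ (via Remark~\ref{und1} and the very definition of $\preceq$), and the pointwise non-increasing layering functions $\vmap_{A_1} \ge \vmap_{A_2} \ge \cdots \ge 1$. Each is to be shown to stabilize separately, and then combined.

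First I would argue that the chain of underlying corner algebraic sets $\underline{X_i}$ stabilizes. By Proposition~\ref{multi3} each facet is convex and hence connected, while Remark~\ref{int3} says that $\vmap_{A_i}$ is constant on each facet and that facets of $X_{i+1}$ are contained in the corresponding facets of $X_i$. I would then induct on the maximal dimension of $\underline{X_i}$, bounded above by $n$: at a strict descent $\underline{X_{i+1}} \subsetneq \underline{X_i}$, Theorem~\ref{multi5} produces an extra binomial relation on $\underline{X_{i+1}}$, forcing at least one top-dimensional facet of $\underline{X_i}$ either to disappear or to be cut into strictly lower-dimensional pieces, whose chain stabilizes by the inductive hypothesis. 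The base case, dimension zero, reduces to descending chains of finite unions of $\nu$-fibres, which manifestly stabilize.

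Second, once $\underline{X_i}$ is constant equal to some $\underline{X}$ for $i \ge N$, the layering functions $\vmap_{A_i}$ are constant on each of the finitely many facets of the (now fixed) polyhedral complex $\underline{X}$, by Remark~\ref{int3}. On each facet one obtains a non-increasing sequence of values in $L$ bounded below by $1$; assuming the order on $L$ is well-founded above $1$ (as in the standing examples $L = \Net$, or more generally in any totally ordered layering \semiring0 with DCC above $1$), each facet-wise sequence terminates, and combining over the finitely many facets of $\underline{X}$ yields global termination of the layering chain.

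The main obstacle is Step~1: establishing DCC for corner algebraic sets in $F^{(n)}$. The delicate point is that a \lalgset\ may be defined by an infinite family $A \subseteq \tR$, so the polyhedral decomposition of $\underline{X_i}$ is not a priori finite; the argument must show that at any given point $\bfa \in \underline{X_i}$ only finitely many $f \in A_i$ can be locally essential, reducing the local structure to the corner algebraic set of a finite subfamily, on which the polyhedral finiteness and the dimensional induction apply.
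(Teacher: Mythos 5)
Your two-phase decomposition is a reasonable organization, but it is more machinery than the paper uses, and Step~1 as written has a soft spot. The paper's argument is essentially a single counting step: by Remark~\ref{int3}, $\vmap_A$ is constant on each facet and under $X \preceq Y$ a facet cannot be ``cut'' — it is either retained whole (possibly with its level lowered) or vanishes entirely. Since $\underline{X_1}$ has finitely many facets, each strict step in a $\preceq$-descending chain either deletes a facet outright or strictly lowers the level of some facet, and termination follows. Your Step~1 instead invokes Theorem~\ref{multi5} and a dimensional induction in which facets may be ``cut into strictly lower-dimensional pieces.'' This both re-derives (more laboriously) what Remark~\ref{int3} already gives for free, and is at odds with it: Remark~\ref{int3} is precisely the assertion that $\preceq$ forbids cutting a facet at all, so no dimensional bookkeeping is needed. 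Moreover Theorem~\ref{multi5} is stated for $\mfC(\tR)$-corner varieties and chains of coordinate-semiring homomorphisms, not for the $\preceq$ order on layered sets, so importing it here would require a bridging lemma you haven't supplied. Once you drop Step~1 in favor of the direct facet count, your Step~2 collapses onto the paper's one-line argument.

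On the other hand, you put your finger on two things the paper is silent about. You explicitly require that the order on $L$ be well-founded above $1$; the paper never says this, and it is genuinely needed (for $L=\Q_{\ge 1}$ a facet's level could in principle descend forever), so flagging it is a real improvement. Your closing worry — that a \lalgset\ defined by an infinite $A\subseteq\tR$ need not a priori have finitely many facets — is also a legitimate gap: the paper's opening phrase ``there are only a finite number of facets'' is asserted, not proved, and a local-finiteness or finite-generation argument of the kind you sketch would be needed to justify it. So the verdict is: your core idea is the same as the paper's, Step~1 should be replaced by a direct appeal to the no-cutting content of Remark~\ref{int3}, and your two side-observations about $L$ and about finiteness of facets are valid caveats the paper glosses over.
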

\begin{proof}
There are only a finite number of facets, and each increase of the
congruence decreases the level of some facet (since by
Remark~\ref{int3} it cannot ``cut'' a facet).
\end{proof}

Note that the layered dimension of the tropical line, defined in
terms of a maximal descending chain of irreducible layered algebraic
sets would be 3, not 1, since the ray along an axis (as well as the
union of the two semi-axes) is a layered algebraic set. This
discrepancy could be resolved by further restricting our class of
congruences along the lines of \S\ref{admiss1}.

\end{document}